\definecolor{vert}{rgb}{0,0.6,0}
\numberwithin{figure}{section}
\theoremstyle{plain}
\newtheorem{thm}{Theorem}[section]
\newtheorem{lem}[thm]{Lemma}
\newtheorem{prop}[thm]{Proposition}
\theoremstyle{remark}
\newtheorem{rem}{\bf{Remark}}
\numberwithin{equation}{section}
\newcommand{\N}{\mathbb{N}}
\newcommand{\R}{\mathbb{R}}
\newcommand{\T}{\mathbb{T}}
\newcommand{\Z}{\mathbb{Z}}
\newcommand{\cA}{\mathcal{A}}
\newcommand{\X}{\mathbb{X}}
\newcommand{\Y}{\mathbb{Y}}
\newcommand{\AC}{{\rm AC\,}}
\newcommand{\USC}{{\rm USC\,}}
\newcommand{\LSC}{{\rm LSC\,}}
\newcommand{\Li}{L^{\infty}}
\newcommand{\Lip}{{\rm Lip\,}}
\newcommand{\lam}{\lambda}
\newcommand{\Div}{{\rm div}\,}
\DeclareMathOperator*{\limsups}{limsup^\ast}
\DeclareMathOperator*{\liminfs}{liminf_\ast}
\begin{document}

\title[large-time behavior]
{Large-time Behavior for a Cutoff Level-Set Mean Curvature G-equation with Source term}

\author[A.D. Calderon]
{Adrian D. Calderon}

\thanks{
The work of AC was partially supported by the RTG: Analysis and Partial Differential Equations at the University of Wisconsin, NSF DMS-2037851.
}

\address[Adrian D. Calderon]
{
Department of Mathematics, 
University of Wisconsin Madison, 480 Lincoln Drive, Madison, WI 53706, USA}
\email{adrian.calderon@.wisc.edu}

\date{\today}
\keywords{Large-time Behavior; Local Regularity; Level set method; Forced Mean Curvature Flow; G-equation; Fully nonlinear parabolic equations; Cutoff; Representation Formula}
\subjclass[2010]{
35B40, 
35B51, 
37J51, 
49L25 
}

\begin{abstract}
    We consider a cutoff level-set mean curvature G-equation with a non-negative source term. In particular, we study the large-time behavior of this fully nonlinear degenerate parabolic partial differential equation in two settings: periodic and radially symmetric. Due to the non-coercivity and non-convexity of the Hamiltonian, we are not able to use the standard Hamilton-Jacobi theory and instead rely on the inherent structure to find a monotonicity property and corresponding uniqueness set. Lastly, we discuss the local regularity of the solutions, as well as a representation formula in the radial symmetric setting. 
\end{abstract}

\maketitle


\section{Introduction and Outline of Paper}
We consider the fully nonlinear degenerate parabolic partial differential equation of the form
\begin{equation}\tag{C} \label{cauchy}
\left\{
\begin{aligned}
    &u_t + \left(-\Div\left(\frac{Du}{|Du|}\right)+1\right)_+|Du| +\vec W(x)\cdot Du = f(x) &&\text{ in } \R^n\times(0,\infty)\\
    &u(x,0) = g(x)  &&\text{ on } \R^n,
\end{aligned}
\right.
\end{equation}
where $g,f\in C(\R^n)$ are a given initial data and non-negative source, respectively. The cutoff function, which is the source of full nonlinearity, is defined by $(\cdot)_+:=\max\{0,\cdot\}$. When describing front propagation, the cutoff operator is a mathematical tool that prohibits any \emph{retreating} movement of the evolving front. We also introduce a \emph{wind-type} vector field $\vec W$, which can be an incompressible mixing flow. This term appears in the so-called G-equation, which is used in premixed turbulent combustion applications \cite{combustion,williams}. Equation \eqref{cauchy} not only has motivations arising from turbulent combustion theory, but also from modeling of crystal growth. See below in the Motivations subsection. Lastly, when describing front propagation, equation \eqref{cauchy} can be considered as a level set equation. That is, the zero-level set of the solution corresponds to the hypersurface of the front. See \cite{lvl set method, osher} for more information on the level set method.

The well-posedness theory of \eqref{cauchy} is already established, see \cite{well-posedness, user, evans and spruck, giga surface evolution}. We will consider finer properties, such as large-time behavior and regularity, under the viscosity solution framework. Such studies for similar first-order and second-order Hamilton-Jacobi equations with convex and coercive Hamiltonians have been completed in the past three decades. See \cite{namah-roquejoffre}, for example. We emphasize that the structure of the nonlinearity in \eqref{cauchy} is non-coercive with respect to the gradient term, which creates some challenges in establishing a priori gradient estimates. Moreover, there is no convexity with respect to the gradient term, which does not allow for the use of standard large-time behavior results in the Hamilton-Jacobi equation theory. Despite these challenges, we have monotonicity of the nonlinear term, namely its non-negative definiteness. Using this structure, we are able to study these deeper properties of viscosity solutions. In this work, we focus on studying \eqref{cauchy} in two settings: \emph{periodic} and \emph{radially symmetric}.

\subsection{Motivations}
From combustion theory, an interesting model arises: the cutoff G-equation with a shear-flow structure, \cite{curvature in shear flow,markstein1, bifurcations}. This model is used to describe the motion of a flame front under some \emph{wind} with the physical restriction that once a region has been burned, it cannot be burned again. This behavior is apparent in wild forest fires, for example. The flame front propagation can be described by the $(n-1)$-dimensional minimal surface-type equation 
\begin{equation*}
    v_t + \left(-\Div\left(\frac{Dv}{\sqrt{p_{n}^2+|Dv|^2}}\right)+1\right)_+\sqrt{p_{n}^2+|Dv|^2} = -p_n \tilde f(x),
\end{equation*}
see \cite{bifurcations}. We can think of \eqref{cauchy} as the $n$-dimensional non-graph mean curvature flow of the flame propagation with the cutoff function prohibiting non-physical burning of already burned regions. Studying the periodic case for the stationary problem corresponds to the ergodic cell problem in homogenization and large-time dynamics theory. Finer properties of the solutions to this model have been studied by the authors of \cite{bifurcations}. In particular, they have proved the existence of an \emph{effective burning velocity} for a range of flow intensities. This is also known as the large-time average. In fact, they showed that if the intensity of the shear-flow is greater than a critical value, there is no asymptotic speed for the evolving flame front, and therefore no hope for convergence to an asymptotic profile. See Section $1$ of \cite{bifurcations} for the main results regarding this. The question whether there is convergence in the suitable range of flow intensities is still open. However, this paper can provide a positive result if a source term appears in the equation. For more works regarding turbulent combustion as well as a two-player game approach, we refer the reader to \cite{yifeng1, turbulent velocity, game theory, Jack Xin}. For a periodic homogenization result for this model, see \cite{jiwoong}. Lastly, see \cite{simulations} for simulations of front propagations.

On the other hand, \eqref{cauchy} has motivations arising from crystal growth applications. For example, the following birth and spread model was introduced in \cite{birth and spread} and \cite{asymptotic speed}:
\begin{equation*}
w_t - \left( \Div \left( \frac{ Dw}{|Dw|} \right) +1 \right)|Dw| = \hat f(x),
\end{equation*}
where we view the unknown $w$ as the height profile of a crystal with deposition being controlled by the source term and horizontal spread of the crystal controlled by the curvature. In this model, the source was given by either a characteristic function or Lipschitz function with compact support. Under such assumptions, the authors proved the existence of the asymptotic crystal growth speed
\begin{equation*}
    c = \lim_{t\to\infty}\frac{w(x,t)}{t}.
\end{equation*}
Now this result gives hope for a positive result for the next question: Does there exist a function $w_\infty$ solving 
\begin{equation*}
    - \left( \Div \left( \frac{ Dw}{|Dw|} \right) +1 \right)|Dw| = \hat f(x) - c
\end{equation*}
such that 
\begin{equation*}
    \lim_{t\to\infty} w(x,t) -ct = w_\infty(x)
\end{equation*}
locally uniformly? Under further assumptions, the same authors proved in \cite{remarks} that this is the case in a radially symmetric setting as well as the case with no such symmetry, but with more technical assumptions. See Section $1.1$ in \cite{remarks} for specifics on those assumptions. For our model \eqref{cauchy}, the cutoff function would correspond to a natural situation where the crystal cannot dissolve once formed. We can perhaps think of the solution in which the crystal lives as being supersaturated. Thus, dissolution is not preferable by the system. Furthermore, the advection term would correspond to a flow in the solution transporting crystal deposits.

\subsection{Preliminaries and Main Results}

\subsubsection{Assumptions}
In the periodic setting we will always assume the following:
\begin{align}
    & \text{Let } g,f\in C(\T^n), \, f\geq0, \text{ and } \vec W\in \Lip(\T^n)^n;\tag{P1} \label{P1} \\
    &\cA: = \{x\in\T^n : f(x) = 0\}\subseteq \{x\in\T^n: \vec W(x) = \vec 0\};  \tag{P2} \label{P2}\\ 
    &\cA\neq \emptyset. \tag{P3} \label{P3}   
\end{align}
When imposing radial symmetry we assume the following:
\begin{align}
    & \text{Let } f(x) = F(|x|)\geq 0 \text{ with } F\in C^1([0,\infty)), \text{ and } g(x) = G(|x|) \tag{R1} \label{R1}\\
    & \qquad \text{ with } G\in C^1([0,\infty))\cap\Li([0,\infty)); \notag  \\
    & \lim_{r\to\infty}F(r) = c_F>0; \tag{R2} \label{R2} \\
    & \vec W\equiv \vec 0 ; \tag{R3} \\
    & \cA _{rad}: = \{r\in(0,\infty): F(r) = 0\} \neq \emptyset \tag{R4} \label{R4}
\end{align}

\subsubsection{Periodic Setting}

\begin{thm} \label{periodic largetime}
    Assume \eqref{P1}--\eqref{P3}. Let $u\in C(\T^n\times (0,\infty))$ be the unique viscosity solution to \eqref{cauchy}. Then, there exists a viscosity solution $v\in C(\T^n)$ to \eqref{periodic ergodic} such that 
    \begin{equation*}
        \lim_{t\to\infty} u(x,t) = v(x) \text{ uniformly in } \T^n.
    \end{equation*}
\end{thm}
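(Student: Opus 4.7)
My plan is to combine the half-relaxed limits method with a monotonicity estimate supported on the ``uniqueness set'' $\cA$, exploiting the crucial fact that both $f$ and $\vec W$ vanish on $\cA$ by \eqref{P2}. First I would establish $\|u(\cdot,t)\|_{L^\infty(\T^n)} \leq C$ uniformly in $t$, together with equicontinuity of $\{u(\cdot,t)\}_{t \geq 0}$. Since \eqref{P3} forces the ergodic constant to equal $0$, constant barriers together with a cell-type correction suffice to bound $u$; equicontinuity in $x$ follows from a standard doubling-of-variables argument that uses the nonnegativity of the cutoff term, and equicontinuity in $t$ then follows from the equation.

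Next, define the half-relaxed limits
\begin{equation*}
\overline{u}(x) := \limsups_{(y,s)\to(x,\infty)} u(y,s), \qquad \underline{u}(x) := \liminfs_{(y,s)\to(x,\infty)} u(y,s),
\end{equation*}
which are respectively usc and lsc with $\underline{u} \leq \overline{u}$ on $\T^n$; standard stability shows $\overline{u}$ is a subsolution and $\underline{u}$ a supersolution of the stationary equation \eqref{periodic ergodic}. The monotonicity step then pins down their values on $\cA$: for $x_0 \in \cA$, I would test at a contact point $(x_0,t_0)$ with functions of the form $\varphi(x,t) = \psi(t) + \frac{|x-x_0|^4}{\varepsilon}$, so that $D\varphi(x_0,t_0) = 0$ and $D^2\varphi(x_0,t_0) = 0$, killing the singular curvature term. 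Since $f(x_0) = 0$ and $\vec W(x_0) = 0$, the subsolution inequality collapses to $\psi'(t_0) \leq 0$. Hence $t \mapsto u(x_0,t)$ is a viscosity subsolution of $w'=0$, thus non-increasing; being bounded by Step 1 it converges to some $v_\infty(x_0)$, giving $\overline{u}(x_0) = \underline{u}(x_0) = v_\infty(x_0)$ for every $x_0 \in \cA$.

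Finally, I would invoke the uniqueness/comparison principle for \eqref{periodic ergodic} with uniqueness set $\cA$: an usc subsolution dominated on $\cA$ by an lsc supersolution is dominated on all of $\T^n$. Applied to $\overline{u}$ and $\underline{u}$, this forces $\overline{u} \leq \underline{u}$ globally, so $\overline{u} = \underline{u} =: v \in C(\T^n)$ solves \eqref{periodic ergodic}, and the convergence $u(\cdot,t) \to v$ is uniform by compactness of $\T^n$ combined with the equicontinuity established in Step 1. The main obstacle is precisely this comparison step, because the Hamiltonian is neither coercive nor convex in $Du$, so the standard Hamilton--Jacobi machinery is unavailable; one must extract a ``hidden'' coercivity from the cutoff $(\cdot)_+$ and the sign structure of $f$ to propagate the coincidence on $\cA$ to all of $\T^n$, which is precisely the structural feature the author emphasizes in the introduction. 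A secondary but more routine technicality is making the monotonicity argument on $\cA$ rigorous at non-interior points of $\cA$ where $|Du|$ may vanish, which is handled by the quartic test functions above.
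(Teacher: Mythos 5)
Your proposal follows the same three-step architecture as the paper: (i) the half-relaxed limits $u^+, u^-$ are, by stability, a subsolution and supersolution of the ergodic problem \eqref{periodic ergodic}; (ii) the monotonicity of $t \mapsto u(x_0,t)$ at points $x_0 \in \cA$ (where both $f$ and $\vec W$ vanish by \eqref{P2}) forces $u^+ = u^-$ on $\cA$; (iii) the comparison principle with uniqueness set $\cA$ (Lemma~\ref{comparison principle}) propagates the coincidence to all of $\T^n$. Your quartic test function $\psi(t) + |x-x_0|^4/\varepsilon$ for the viscosity monotonicity argument matches the structure of the paper's auxiliary functions and is the right choice here: the point is that $|\vec W(x^\varepsilon)\cdot p^\varepsilon| \lesssim |x^\varepsilon - x_0|^4/\varepsilon \to 0$ precisely because $\vec W$ is Lipschitz and vanishes on $\cA$, a cancellation a quadratic penalty would not give. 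That said, be careful at two points. First, you state that uniform-in-$t$ equicontinuity in $x$ ``follows from a standard doubling-of-variables argument that uses the nonnegativity of the cutoff term.'' The paper's own Section~4 only proves $u \in \Lip(\T^n\times[0,T])$ with a spatial Lipschitz constant $C_0 T + C_1$ that blows up as $T\to\infty$, so uniform equicontinuity is far from routine here and cannot be taken for granted; some further argument (or a different way to identify $u^+$ with $u^-$ on $\cA$, since the half-relaxed limits involve a spatial blur) is needed, and the paper itself is terse on this point. Second, the claim that ``constant barriers together with a cell-type correction suffice to bound $u$'' presupposes existence of a bounded solution to the ergodic problem, which in turn leans on exactly the comparison/uniqueness machinery being built; you should be explicit that constants alone (using $f \geq 0$ and the nonnegativity of the cutoff) already give $u \geq \min g$, and the upper bound requires either a constructed supersolution or the monotonicity plus the representation of the solution.
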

\noindent Equation \eqref{periodic ergodic} is defined in Section $2$. 

\subsubsection{Radially Symmetric Setting}
 
\begin{thm} \label{lt behavior rad}
    Assume \eqref{R1}--\eqref{R4}. Let $U\in C((0,\infty)\times[0,\infty))$ be the unique viscosity solution to \eqref{cauchy}. Then, there exists a $V\in C((0,\infty))$ viscosity solution to \eqref{radial ergodic} such that 
    \begin{equation} \label{lg time behavior radial}
        \lim_{t\to\infty} U(r,t) = V(r) \quad \text{locally uniformly on } (0,\infty).
    \end{equation}
\end{thm}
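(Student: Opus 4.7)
The plan is to leverage radial symmetry to reduce \eqref{cauchy} to a one-dimensional first-order convex Hamilton--Jacobi problem, sandwich $U$ between a trivial lower barrier and a stationary upper barrier anchored at some $r_*\in\cA_{rad}$, establish monotonicity of $t\mapsto U(\cdot,t)$, and pass to the limit by the half-relaxed limits method followed by Dini's theorem.

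First, under \eqref{R1}--\eqref{R3}, uniqueness of the viscosity solution together with the rotational invariance of the data forces $u(x,t)=U(|x|,t)$, and a direct computation gives $\Div(Du/|Du|)(x) = \tfrac{n-1}{|x|}\sgn(U_r)$. Hence $U$ is the unique viscosity solution of the first-order equation
\begin{equation*}
    U_t + \cH(r,U_r) = F(r) \text{ on } (0,\infty)\times(0,\infty),\qquad \cH(r,p):=\left(-\tfrac{n-1}{r}\sgn(p)+1\right)_{+}|p|,
\end{equation*}
with $U(r,0)=G(r)$. Note that $\cH(r,\cdot)$ is convex and piecewise linear, degenerate on $\{p\geq 0,\ r\leq n-1\}$ and coercive on $\{p<0\}$ with slope $-(1+\tfrac{n-1}{r})<-1$.

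Next I would build barriers. Since $F\geq 0$, constants are viscosity subsolutions, so $U\geq \inf G$. For a local upper barrier, fix $r_*\in\cA_{rad}$ and integrate outward from $r_*$ to produce a stationary viscosity solution $\ol V\in C((0,\infty))$ of $\cH(r,\ol V_r)=F(r)$: on $(0,r_*)$ take the coercive branch $\ol V_r(r) = -rF(r)/(n-1+r)$, and on $(r_*,\infty)$ glue in the right branch $\ol V_r(r) = rF(r)/(r-(n-1))$ for $r>n-1$, passing through the degenerate interval $\{r\leq n-1\}$ with a nonclassical downward cusp if needed. After adding a constant so that $\ol V\geq G$, comparison gives $U(r,t)\leq \ol V(r)$ for all $t$, hence $U$ is bounded on every compact subset of $(0,\infty)$. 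To upgrade these bounds to a limit, I would establish the monotonicity $U(r,t+s)\geq U(r,t)$ for all $s,t\geq 0$: it suffices to prove $U(r,s)\geq G(r)$ for every $s\geq 0$ and then invoke the comparison principle on the shifted solution. For this, test $v(r,t):=G(r)$ as a viscosity subsolution, using that the $C^1$ regularity of $G$ together with nonnegativity of both the cutoff and $F$ provides the necessary pointwise inequality (the cutoff disarms the term wherever $G_r$ points in a problematic direction, and $F\geq 0$ covers the remaining case).

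Combined with the upper barrier, monotonicity yields a finite pointwise limit $V(r):=\lim_{t\to\infty}U(r,t)$ on $(0,\infty)$. By the half-relaxed limits method, $V^*:=\limsups_{t\to\infty}U$ and $V_*:=\liminfs_{t\to\infty}U$ are respectively viscosity sub- and supersolutions of the stationary problem \eqref{radial ergodic}; pointwise convergence forces $V^*=V_*=V$, so $V\in C((0,\infty))$ and $V$ solves \eqref{radial ergodic}. Since the convergence is monotone and the limit is continuous, Dini's theorem upgrades pointwise convergence on each compact $[a,b]\subset(0,\infty)$ to uniform convergence, which is \eqref{lg time behavior radial}.

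The main obstacle I anticipate is the construction of the upper barrier $\ol V$ through the degenerate regime $\{r\leq n-1\}$ when $r_*<n-1$, since the right branch of $\cH$ is unavailable there whenever $F>0$; one must carefully assemble $\ol V$ via a nonclassical viscosity extension and verify the supersolution inequality across the transition points. A secondary subtlety is the monotonicity step: because $\cH$ is non-coercive in $U_r$, standard Lipschitz-in-$t$ estimates are unavailable, and the argument must rely directly on the sign structure of $F$ and the cutoff through the viscosity testing of $v(r,t)=G(r)$.
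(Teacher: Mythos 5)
Your proposal has a genuine gap at the monotonicity step, and the gap is fatal to the overall strategy. You claim that $v(r,t):=G(r)$ is a viscosity subsolution to \eqref{radcauchy}, so that $U(r,s)\geq G(r)$ for all $s$, and hence $t\mapsto U(r,t)$ is non-decreasing. This requires $\left(-\tfrac{n-1}{r}G_r+|G_r|\right)_+\leq F(r)$ for all $r$, which fails for generic initial data. Where $G_r<0$ the cutoff does \emph{not} disarm anything: the argument equals $\left(\tfrac{n-1}{r}+1\right)|G_r|>0$, and this can easily exceed $F(r)$ — in particular near any point of $\cA_{rad}$ where $F$ vanishes. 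Concretely, take $n=2$, $F(r)=(r-2)^2$ near $r=2$, and $G(r)=-r$; then at $r^*=2\in\cA_{rad}$ one has $\cH(2,G_r(2))=\tfrac{3}{2}>0=F(2)$, so formally $U_t(2,0)=-\tfrac{3}{2}<0$ and $U(2,s)<G(2)$ for small $s$. This is not an accident: the paper's own remark (using the representation formula \eqref{refined rep formula}) shows that on $\cA_{rad}$ the map $t\mapsto U(r^*,t)$ is \emph{non-increasing}, the opposite of what you assert, and off $\cA_{rad}$ neither sign of monotonicity is available in general because the source $F\geq 0$ and the nonnegative Hamiltonian compete. Without global monotonicity, your pointwise limit does not exist a priori, and the Dini step collapses.

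The paper circumvents this by using monotonicity only where it is actually true — on $\cA_{rad}$, where $F=0$ forces $U_t\leq 0$ — which yields $U^+=U^-$ on the Aubry set alone. The real work is then a comparison principle for the ergodic equation \eqref{radial ergodic} with $\cA_{rad}$ as the uniqueness set (Lemma \ref{2nd radial CP}), proved by a three-region argument (between Aubry points, beyond $R_1$ using the linear growth $c_F>0$ from \eqref{R2}, and near the origin with a $\delta/r$ penalization); this propagates $U^+\leq U^-$ from $\cA_{rad}$ to all of $(0,\infty)$. Your barrier ideas (a stationary supersolution anchored at $r_*\in\cA_{rad}$, linear growth at infinity) are in the same spirit as the paper's remark on the non-emptiness of the class of admissible sub/supersolutions and could serve to verify the growth hypotheses of the comparison lemma, but they cannot replace it, and they certainly cannot substitute for a monotonicity that is simply false.
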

\noindent Equation \eqref{radial ergodic} is defined in Section $3$.

The following result pertains to a representation formula for the limiting stationary solution:
\begin{thm} \label{Thm Rep formula for limit}
    Assume \eqref{R1}--\eqref{R4}. Let $U\in C\left( [0,\infty)\times (0,\infty) \right)$ be the unique viscosity solution to \eqref{radcauchy} and $V\in C([0,\infty))$ be the viscosity solution to \eqref{radial ergodic} such that \eqref{lg time behavior radial} holds. Then, we have the following representation formula for $V$:
    \begin{equation} \label{rep formula for limit}
        V(r) = \inf \left\{ 
        \inf_{\gamma\in\mathcal{C}(t,0;r,s),t>0} \left\{ 
\int_0^t F(\gamma(z))\,dz\right\} +\, v_G(s):s\in \cA_{rad}
        \right\},
    \end{equation}
    where $v_G(s): = \inf_{\gamma\in\mathcal{C}(t,0;s,\rho),\rho\in(0,\infty),t>0}\left\{ 
\int_0^t F(\gamma(z))\,dz+ G(\rho)\right\}$.
\end{thm}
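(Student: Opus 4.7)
The plan is to combine the large-time convergence from Theorem \ref{lt behavior rad} with a control-theoretic representation of the time-dependent solution $U(r,t)$, and then pass to the limit $t\to\infty$, exploiting that $\cA_{rad}$ acts as a resting set for near-optimal trajectories.

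First, I would establish a representation of the form
\begin{equation*}
U(r,t) = \inf\left\{\int_0^t F(\gamma(z))\,dz + G(\rho)\ :\ \rho\in(0,\infty),\ \gamma\in\cC(t,0;r,\rho)\right\},
\end{equation*}
where $\cC(t,0;r,\rho)$ is the class of admissible curves used in Section~3. The verification proceeds by showing that the right-hand side is a viscosity solution of \eqref{radcauchy} with initial datum $G$, via a dynamic programming argument that distinguishes the regime where the cutoff is active (where level sets are \emph{frozen}) from the regime where it is inactive, and then invoking uniqueness from the well-posedness theory.

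Next, since $U(\cdot,t)\to V$ locally uniformly on $(0,\infty)$ by Theorem \ref{lt behavior rad}, the task reduces to passing to the limit inside the representation. The non-negativity of $F$ combined with $c_F>0$ from \eqref{R2} forces near-optimal trajectories to spend asymptotically all their time in $\cA_{rad}$, where $F$ vanishes and the cutoff permits stationarity. For the upper bound, given any $s\in\cA_{rad}$ and near-optimal curves for the two infima defining the right-hand side of \eqref{rep formula for limit}, I would concatenate three segments --- a curve from $\rho$ to $s$, a long rest at $s$ (contributing zero since $F(s)=0$), and a curve from $s$ to $r$ --- to produce an admissible trajectory realizing the desired cost as $t\to\infty$. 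For the lower bound, given a near-optimal trajectory $\gamma_t$ for the time-$t$ problem, I would extract a first hitting time $\tau_t$ of (a neighborhood of) $\cA_{rad}$, split the integral at $\tau_t$, and use compactness together with the continuity of $F$ and $G$ to extract, via a diagonal argument, a two-leg decomposition whose cost matches the right-hand side of \eqref{rep formula for limit}.

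The main obstacle is the first step: establishing the dynamic programming representation under the non-coercive, non-convex cutoff Hamiltonian. Standard Hamilton--Jacobi value-function techniques require convexity or coercivity in the gradient, both of which fail here, so the argument must rely on the monotonicity structure and uniqueness set highlighted in the introduction, together with the radial reduction that makes the problem effectively one-dimensional in $r$. A secondary technical point is ruling out escape to infinity of near-optimal trajectories in the lower bound, which uses \eqref{R2} to give uniform control on $\int F(\gamma)\,dz$ outside a compact set and \eqref{R4} to guarantee that $\cA_{rad}$ is non-empty, providing a well-defined target for the hitting-time argument.
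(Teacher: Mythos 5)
Your overall skeleton is sound, but it contains a central misconception that undermines what you identify as the main obstacle. You claim that establishing the dynamic programming representation for $U(r,t)$ is difficult because ``Standard Hamilton--Jacobi value-function techniques require convexity or coercivity in the gradient, both of which fail here.'' This is incorrect for the radial reduction. After passing to radial coordinates, equation \eqref{radcauchy} becomes a \emph{first-order} Hamilton--Jacobi equation with Hamiltonian
\[
H(r,p)=\Bigl(-\tfrac{n-1}{r}\,p+|p|\Bigr)_+-F(r),
\]
and this $H$ \emph{is} convex in $p$: the map $p\mapsto -\tfrac{n-1}{r}p+|p|$ is convex (linear plus $|p|$), and $(\cdot)_+=\max\{0,\cdot\}$ is convex and nondecreasing, so the composition is convex. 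The paper observes exactly this and immediately invokes the Legendre-transform/optimal-control formula to obtain \eqref{refined rep formula}; the Lagrangian computation (with the two regimes $(0,n-1]$ and $(n-1,\infty)$ and the resulting velocity constraints encoded in $\mathcal{C}(t,0;r,s)$) is carried out explicitly in Section~3. Non-convexity is an issue only for the full $n$-dimensional problem, not after the radial reduction. Coercivity does indeed fail near $r\le n-1$, but that affects Lipschitz estimates, not the validity of the value-function representation. So your ``first step'' is already available without any new argument, and your stated main obstacle is a red herring.

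Once that misconception is removed, the remainder of your strategy --- pass to the limit $t\to\infty$ inside \eqref{refined rep formula} by an upper bound via concatenation (travel to $s\in\cA_{rad}$, rest there at zero running cost, then travel to $r$) and a lower bound via a first-hitting-time decomposition at $\cA_{rad}$, with \eqref{R2} preventing escape to infinity --- is a legitimate and more self-contained route than the one the paper actually takes. The paper instead argues indirectly: it uses the monotonicity on $\cA_{rad}$ (equation \eqref{monotonicity}) to identify $\lim_{t\to\infty}U(r,t)=v_G(r)$ for $r\in\cA_{rad}$, then invokes the comparison principle of Lemma \ref{2nd radial CP} (which makes $\cA_{rad}$ a uniqueness set) together with the convexity of $H$ to conclude that the stationary limit is determined by its trace on $\cA_{rad}$ via the formula \eqref{rep formula for limit}, citing Corollary~5.25 of \cite{dynamical}. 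Your approach effectively re-proves that cited representation result from scratch; the paper's approach is shorter because it delegates the Aubry-set representation to the reference. Either route works, but you should correct the convexity claim and acknowledge that the representation formula for $U$ is already in hand.
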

\medskip

\section{Periodic setting}
It is convenient to rewrite \eqref{cauchy} into the following form: 

\begin{equation} \tag{C$^\prime$} \label{periodic cauchy}
\left\{
\begin{aligned}
    &u_t + \left(- a^{ij}(Du)(D^2u)_{ij} +|Du|\right)_+ +\vec W(x)\cdot Du=f(x) &&\text{ in } \T^n\times(0,\infty)\\
    &u(x,0) = g(x) &&\text{ on } \T^n,
    \end{aligned}
\right.
\end{equation}
with $a^{ij}(Du) := \left(\delta_{ij} - \frac{u_{x_i}u_{x_j}}{|Du|^2}\right)$ and cutoff $(\cdot)_+ := \max\{0,\cdot\}$. Here, $\delta_{ij}$ is the Kronecker delta. Moreover, $g\in C(\T^n)$ is a given initial data and $f\in C(\T^n)$ is a given non-negative source. The so-called \emph{wind flow} $\vec W$ is a Lipschitz vector field. Lastly, we define $\T^n:= \R^n/\Z^n$ as the usual $n$-dimensional flat torus. We will also consider the corresponding ergodic cell problem 
\medskip
\begin{equation} \tag{E} \label{periodic ergodic}
    \left(- a^{ij}(Dv)(D^2v)_{ij} +|Dv|\right)_+ +\vec W(x)\cdot Du = f(x) \quad \text{ in } \T^n. 
\end{equation}

In studying the large-time dynamics of solutions, it is useful to consider the Aubry set, which corresponds to dynamically invariant regions with respect to the Hamiltonian. Let us consider the two following sets:
\begin{equation*}
    \{x\in\T^n: f(x) = 0\}\quad \text{and} \quad \{x\in\T^n: \vec W(x) = \vec 0\}.
\end{equation*}
Because of \eqref{P2} and \eqref{P3}, 
\begin{equation*}
    \emptyset \neq \cA: = \{x\in\T^n : f(x) = 0\}\subseteq \{x\in\T^n: \vec W(x) = \vec 0\},
\end{equation*}
which is the corresponding Aubry set for our problem.
\begin{rem}
    The Aubry set $\cA$ is a monotonicity set for the Cauchy problem \eqref{periodic cauchy}. Indeed, assume that $u$ is smooth enough, and observe by the PDE that
\begin{align*}
    u_t &= f(x) - \left(- a^{ij}(Du)(D^2u)_{ij} +|Du|\right)_+ -\vec W(x)\cdot Du \\
    &\leq f(x) -\vec W(x)\cdot Du = 0,\notag
\end{align*}
for all $(x,t)\in\cA\times(0,\infty)$. Therefore, for $x\in\cA$ fixed, the map $t\mapsto u(x,t)$ is non-increasing. We will see that $\cA$ serves not only as a monotonicity set, but as a uniqueness set also. See Lemma \ref{comparison principle}.
\end{rem}

\begin{rem}
    The Aubry set $\cA$ contains all points $x\in\T^n$ such that when $f(x) = 0$, then necessarily $\vec W(x) = \vec 0$. Examples of this include $\vec W(x)\equiv\vec0$ or $\vec W(x) = \vec w f(x)$ with $\vec w$ being a constant vector field. 
\end{rem}

In order to prove Theorem \ref{periodic largetime}, we require a comparison principle on the Aubry set for the ergodic problem \eqref{periodic ergodic}. One can think of this as a Dirichlet problem with the Aubry set as the \emph{boundary} of the domain. 
\begin{lem} \label{comparison principle}
    Assume \eqref{P1}--\eqref{P3}. Let $v\in \USC(\T^n)\cap\Li(\T^n)$ and $w\in \LSC(\T^n)\cap\Li(\T^n)$ be a viscosity subsolution and supersolution, respectively, to \eqref{periodic ergodic}. Furthermore, let $v\leq w$ on $\cA$. Then, we necessarily have $v\leq w$ in $\T^n$.
\end{lem}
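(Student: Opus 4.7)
The plan is to adapt the standard doubling-of-variables comparison argument, but first to compensate for the fact that the Hamiltonian
$$H(x,p,X) := \bigl(-a^{ij}(p)X_{ij} + |p|\bigr)_+ + \vec W(x)\cdot p - f(x)$$
is independent of $v$ itself, so a naive doubling collapses to $0 \leq 0$. I will inject the missing strict inequality by constructing a one-parameter family of strict subsolutions $v_\tau$ that still lies below $w$ on $\cA$, and then run the doubling against $w$ for each $\tau$.

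\textbf{Strict-subsolution perturbation.} Set $C := \min_{\T^n} v$ and, for $\tau\in(0,1)$, define $v_\tau := (1-\tau)v + \tau C$. Because $a^{ij}(\cdot)$ is positively homogeneous of degree $0$, $(\cdot)_+$ is positively homogeneous of degree $1$, and the remaining terms are linear in the derivatives, plugging $v_\tau$ into a smooth test function $\phi$ (equivalently, testing $v$ against $(\phi-\tau C)/(1-\tau)$) and factoring out $1-\tau$ shows that $v_\tau$ is a viscosity subsolution of
$$\bigl(-a^{ij}(Dv_\tau)(D^2v_\tau)_{ij} + |Dv_\tau|\bigr)_+ + \vec W(x)\cdot Dv_\tau \leq (1-\tau) f(x) \quad\text{in } \T^n.$$
Thus $v_\tau$ is \emph{strict} off $\cA$ with defect $\tau f(x)>0$, while $v_\tau \leq v \leq w$ on $\cA$ because $C \leq v$ pointwise.

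\textbf{Doubling and Crandall--Ishii.} Assume for contradiction that $\sup_{\T^n}(v-w)>0$ and choose $\tau>0$ small enough that $\sup_{\T^n}(v_\tau-w)>0$ as well. Let $(\xeps,\yeps)$ maximize $\aux(x,y):=v_\tau(x)-w(y)-|x-y|^2/(2\ep)$. Standard penalty estimates give, along a subsequence, $\xeps,\yeps\to x^*$ with $|\xeps-\yeps|^2/\ep\to 0$ and $(v_\tau-w)(x^*)=\sup(v_\tau-w)>0$. If $x^*\in\cA$ this contradicts $v_\tau\leq w$ on $\cA$, so $x^*\notin\cA$ and $f(x^*)>0$. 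The Crandall--Ishii lemma produces $(p_\ep,\Xeps)\in\superjet v_\tau(\xeps)$ and $(p_\ep,\Yeps)\in\subjet w(\yeps)$ with $p_\ep=(\xeps-\yeps)/\ep$ and $\Xeps\leq\Yeps$. Subtracting the sub- and supersolution inequalities,
$$\bigl(-a^{ij}(p_\ep)(\Xeps)_{ij}+|p_\ep|\bigr)_+ - \bigl(-a^{ij}(p_\ep)(\Yeps)_{ij}+|p_\ep|\bigr)_+ + (\vec W(\xeps)-\vec W(\yeps))\cdot p_\ep \leq (1-\tau) f(\xeps)-f(\yeps).$$
The first difference is $\geq 0$ by positive semi-definiteness of $a^{ij}(p_\ep)$ together with monotonicity of $(\cdot)_+$; the wind term is $\geq -\Lip(\vec W)|\xeps-\yeps|^2/\ep = o(1)$; and the right-hand side tends to $-\tau f(x^*)<0$. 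Letting $\ep\to 0$ yields $0\leq -\tau f(x^*)<0$, a contradiction. Hence $v_\tau\leq w$ on $\T^n$ for every $\tau\in(0,1)$, and sending $\tau\to 0^+$ gives $v\leq w$ on $\T^n$.

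\textbf{Main obstacle.} The only genuine structural step is the construction of $v_\tau$: the affine rescaling $v\mapsto(1-\tau)v+\tau C$ is tailored precisely to the $0$-homogeneity of $a^{ij}$ and the $1$-homogeneity of $(\cdot)_+$, which together ensure that the factor $(1-\tau)$ peels off cleanly and lands on the right-hand side as the defect $\tau f$, while the choice $C=\min v$ preserves the ordering $v_\tau\leq w$ on $\cA$. A minor technicality is that $a^{ij}(p)$ is singular at $p=0$, which is handled as usual by working with the upper and lower semicontinuous envelopes of $H$, as is standard in the level-set mean curvature setting.
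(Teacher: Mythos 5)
Your proof is correct and uses a genuinely different (if morally parallel) route from the paper's. Both arguments create strict separation by exploiting the joint $1$-homogeneity of the operator $(p,X)\mapsto(-a^{ij}(p)X_{ij}+|p|)_++\vec W\cdot p$ in the derivatives, but they scale opposite functions. The paper dilates the supersolution, setting $w^\lambda:=\lambda w$ with $\lambda>1$, so that $w^\lambda$ is a supersolution of the equation with source $\lambda f$; you instead contract the subsolution, $v_\tau:=(1-\tau)v+\tau C$ with $C=\inf v$, making $v_\tau$ a subsolution with source $(1-\tau)f$. Your choice has a real advantage: because $C\le v$ pointwise, you get $v_\tau\le v$ everywhere, hence $v_\tau\le w$ on $\cA$ for free, which guarantees the limit point $x^*$ of the doubling sequence lies off $\cA$. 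The paper's $w^\lambda=\lambda w$ does not preserve the ordering $v\le w$ on $\cA$ without first normalizing $w\ge0$ (add a constant to both $v$ and $w$), a step the paper uses implicitly when it asserts $\hat x\in\T^n\setminus\cA$; your affine shift by $\tau C$ sidesteps that entirely. The second genuine difference is the penalization: you use the quadratic $|x-y|^2/(2\ep)$, which forces you to invoke the semicontinuous envelopes of the Hamiltonian when $p_\ep=0$, whereas the paper uses the quartic $|x-y|^4/\ep$ (whose gradient and Hessian both vanish at $x=y$) precisely so it can rule out $x^\ep=y^\ep$ by a direct supersolution test at a point of strict positivity of $f$, avoiding envelopes altogether. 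Your appeal to envelopes is standard and does go through (with $\X_\ep\le\Y_\ep$ one still has $\sup_{|\eta|=1}(-\tr((I-\eta\otimes\eta)\X_\ep))_+\ge\inf_{|\eta|=1}(-\tr((I-\eta\otimes\eta)\Y_\ep))_+$, so the left side of your combined inequality remains $\ge0$), but it is worth noting that the paper's quartic penalization is the more common device in the degenerate second-order literature, and spelling out the envelope inequality would make your argument self-contained. Also a small point: $v$ is only upper semicontinuous, so $\min_{\T^n}v$ may not be attained; write $C:=\inf_{\T^n}v$ (finite since $v\in L^\infty$), which is all you use.
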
 
\begin{proof}
Let $v,w$ be as stated in the hypothesis. Suppose for contradiction that there exists a point $x_0\in\T^n$ such that 
\begin{equation*}
    \max_{\T^n} \left(v(x)-w(x)\right) = v(x_0) - w(x_0) >0.
\end{equation*}
It is clear that $x_0\in\T^n\setminus \cA$. It is useful to consider the following rescaling of the viscosity supersolution $w$ to provide some room in the argument:
\begin{equation*}
w^\lambda(x):= \lambda w(x) \quad \text{ for } \lambda>1.
\end{equation*}
Observe that $w^\lambda$ is itself lower semi-continuous and bounded. We now make the following claim on the stability of maxima of upper semi-continuous functions:
We claim that there exists a $\lambda>1$ sufficiently close to one such that 
    \begin{equation*}
    \max_{\T^n} \left(v(x) - w^{\lambda}(x)\right) >0.
    \end{equation*}
    In order to prove the claim, we suppose for contradiction that we have a sequence $\{\lambda_k\}_{k\in\N}$ such that $\lambda_k\to1^+$ as $k\to\infty$, and we have that 
    \begin{equation*}
    \max_{\T^n} \left( v(x) - w^{\lambda_k}(x) \right) \leq 0 ,\quad \forall k\in\N.
    \end{equation*}
By upper semi-continuity of $v -w^{\lambda_k}$, we have that there exist a subsequence $\{\lambda_{k_j}\}_{j\geq1}$ and subsequence of points $\{x_j\}_{j\geq1}$ such that $x_j\to x_0$ as $j\to\infty$ and 
\begin{equation*}
\limsup_{j\to\infty} \left(v(x_j)-w^{\lambda_{k_j}}(x_j) \right)= v(x_0) -w(x_0).
\end{equation*}
Recall that $v(x_0) - w(x_0) >0.$ However, for every $j\geq1$ we necessarily have that 
\begin{equation*}
v(x_j)-w^{\lambda_{k_j}}(x_j) \leq 0.
\end{equation*}
This gives a contradiction. Hence, the claim is
true. 
Let us now fix such $\lambda>1$ satisfying the previous claim.
Now define the auxiliary function $\Psi^\varepsilon : \T^n\times\T^n\to \R$ by 
\begin{equation*}
    \Psi^\varepsilon (x,y) := v(x) - w^\lambda(y) - \frac{|x-y|^4}{\varepsilon}.
\end{equation*}
Observe that $\Psi^\varepsilon$ is upper semi-continuous, and so attains its maximum on a compact set. We denote the sequence of maximizers by $\{(x^\varepsilon,y^\varepsilon)\}_{\varepsilon>0}$. We follow with 3 more important claims:
We first claim that
    $\frac{|x^\varepsilon - y^\varepsilon|^4}{\varepsilon} \to 0$ as $\varepsilon \to 0$.
Denote $M_\varepsilon := \max_{\T^{2n}} \Psi^\varepsilon$. Note that $M_\varepsilon$ is decreasing with respect to $\varepsilon$. Moreover, 
\begin{equation*}
    \sup_{x=y}\Psi^\varepsilon(x,y) \leq M_\varepsilon \leq M_1.
\end{equation*}
Therefore, $M_0 := \lim_{\varepsilon \to 0} M_\varepsilon$ is well-defined and finite. It follows then that 
\begin{align*}
    M_{2\varepsilon} - \left(\frac{1}{\varepsilon} - \frac{1}{2\varepsilon}\right)|x^\varepsilon - y^\varepsilon|^4 &\geq v(x^\varepsilon) - \lambda w(\frac{y^\varepsilon}{\lambda}) - \frac{|x^\varepsilon - y^\varepsilon|^4}{2\varepsilon} - \left(\frac{1}{\varepsilon} - \frac{1}{2\varepsilon}\right)|x^\varepsilon - y^\varepsilon|^4 \\
    &= v(x^\varepsilon) - \lambda w(\frac{y^\varepsilon}{\lambda}) - \frac{|x^\varepsilon - y^\varepsilon|^4}{\varepsilon} = M_\varepsilon.
\end{align*}
Hence, we have 
\begin{equation*}
    M_{2\varepsilon} - M_\varepsilon \geq \left(\frac{1}{\varepsilon} - \frac{1}{2\varepsilon}\right)|x^\varepsilon - y^\varepsilon|^4 = \frac{|x^\varepsilon - y^\varepsilon|^4}{\varepsilon},
\end{equation*}
and sending $\varepsilon \to 0$ we have proven the claim. 

We next claim that 
    $(x^\varepsilon,y^\varepsilon) \to (\hat x,\hat y)\in \T^n \times \T^n$ up to a subsequence. We are in a compact setting, and so the sequence $\{(x^\varepsilon,y^\varepsilon)\}_\varepsilon$ has a convergent subsequence $\{(x^{\varepsilon_j},y^{\varepsilon_j})\}_j$. 
Lastly, we claim that 
    necessarily $\hat x = \hat y$.
Observe that by the previous claim, we have that $|x^\varepsilon - y^\varepsilon|^4$ is $o(\varepsilon)$. So, along the convergent subsequence we have that the limit points are the same. Furthermore, we necessarily have that $\hat x\in \T^n\setminus \cA$. We will use this important fact later. Note that whenever we send $\varepsilon\to 0$, we mean along this particular convergent subsequence. 

We continue by applying the Crandall-Ishii Lemma or Theorem of Sums, which can be found in \cite{user}. First denote the test function in the auxiliary function $\Psi^\varepsilon$ by $\varphi(x,y):= \frac{|x-y|^4}{\varepsilon}$. Then, by the Crandall-Ishii Lemma we have for $\kappa>0$ sufficiently small such that $2\kappa D^2 \varphi(x^\varepsilon,y^\varepsilon) < I_{2n}$, there exist symmetric $n\times n$ matrices $\X^\varepsilon, \Y^\varepsilon$ satisfying 
\begin{equation} \label{super jet} \tag{Super Jet}
    \left(v(x^\varepsilon),D_x\varphi(x^\varepsilon,y^\varepsilon), \X^\varepsilon \right) \in \overline {\begin{bf}J\end{bf}^{2,+}}[v(x^\varepsilon)]
\end{equation}
and 
\begin{equation} \label{sub jet} \tag{Sub Jet}
    \left(w^\lambda(y^\varepsilon),-D_y\varphi(x^\varepsilon,y^\varepsilon), \Y^\varepsilon \right) \in \overline {\begin{bf}J\end{bf}^{2,-}}\left[w^\lambda(y^\varepsilon)\right].
\end{equation}
Moreover, we have the key inequality 
\begin{equation} \label{key ineq}
    \begin{pmatrix}
        \X^\varepsilon & 0\\
        0 & -\Y^\varepsilon
    \end{pmatrix} 
    \leq D^2\varphi(x^\varepsilon,y^\varepsilon) + 2\kappa
\, (D^2\varphi(x^\varepsilon,y^\varepsilon))^2.
\end{equation}
Denote
\[
p^\varepsilon:= D_x\varphi(x^\varepsilon,y^\varepsilon) = -D_y\varphi(x^\varepsilon,y^\varepsilon) = \frac{4|x^\varepsilon - y^\varepsilon|^2}{\varepsilon}(x^\varepsilon - y^\varepsilon).
\]
We will use the fact that $w$ is a viscosity supersolution and the $1$-homogeneity of \eqref{periodic ergodic} to deduce which PDE $w^\lambda$ is a viscosity supersolution of. Observe if $w$ were smooth, we would have
\begin{equation*}
\left\{
\begin{aligned}
&Dw^\lambda(y) = D[\lambda w(y)] = \lambda Dw(y)\\
&D^2w^\lambda(y) = D^2[\lambda w(y)] = \lambda D^2w(y)\\
&-a^{ij}(Dw^\lambda(y)) = -a^{ij}(D[\lambda w(y)]) = -a^{ij}(Dw(y)).
\end{aligned}
\right.
\end{equation*}
So, for $y\in\T^n\setminus \cA$ we have  
\begin{equation*}
    \left( -a^{ij}\left(Dw(y)\right)\left(D^2w(y)\right)_{ij} + \left|Dw(y)\right|
    \right)_+ + \vec W(y)\cdot Dw(y)
    \geq f(y).
\end{equation*}
Using the previous identities give
\begin{equation*}
    \left( 
    -a^{ij}\left(Dw^\lambda(y)\right)\left(\frac{1}{\lambda} D^2w^\lambda(y)\right)_{ij} + \left|\frac{1}{\lambda}Dw^\lambda(y)\right|
    \right)_+ + \vec W(y)\cdot \frac{1}{\lambda}Dw^\lambda(y)
    \geq f\left(y\right).
\end{equation*}
Which is equivalent to 
\begin{equation}\label{new super}
   \left(
   -a^{ij}\left(Dw^\lambda(y)\right)\left(D^2w^\lambda(y)\right)_{ij} + \left|Dw^\lambda(y)\right|
   \right)_+ + \vec W(y)\cdot Dw^\lambda(y)
   \geq \lambda f(y).
\end{equation}
Hence, $w^\lambda$ is a viscosity solution to \eqref{new super} for all $y\notin\cA$. We now make another claim, that for $\varepsilon>0$ sufficiently small we necessarily have that $x^\varepsilon\neq y^\varepsilon$. Suppose, for contradiction, that $x^\varepsilon = y^\varepsilon$ with $x^\varepsilon$ being arbitrarily close to $\hat x\in\T^n\setminus \cA$. Then, the map $y\mapsto w^\lambda(y) + \frac{|x^\varepsilon - y|^4}{\varepsilon}$ has a min at $x^\varepsilon$, and so by the supersolution test we have 
\begin{equation*}
    \left(
    -\left(\delta_{ij} - \eta\otimes\eta\right)\left(\begin{bf}0\end{bf}_n\right)_{ij} + |\vec 0|
    \right)_+ + \vec W(y)\cdot \vec 0
    \geq \lambda f(x^\varepsilon)>0,
\end{equation*}
where $\eta\in\R^n$ with $|\eta|\leq1$ and $\begin{bf}0\end{bf}_n$ is the zero $n\times n$ matrix. This gives a contradiction. Therefore, we must have $x^\varepsilon\neq y^\varepsilon$ for arbitrarily small $\varepsilon$.
We now apply the subsolution and supersolution tests using \eqref{super jet} and \eqref{sub jet}, which give
\begin{equation} \tag{subtest}
    \left(
    -a^{ij}(\begin{bf} p\end{bf}^\varepsilon) \X^\varepsilon_{ij} + |\begin{bf} p\end{bf}^\varepsilon|
    \right)_+ + \vec W(x^\varepsilon)\cdot \begin{bf} p\end{bf}^\varepsilon
    \leq f(x^\varepsilon)
\end{equation}
and 
\begin{equation} \tag {supertest}
    \left(
    -a^{ij}(\begin{bf} p\end{bf}^\varepsilon) \Y^\varepsilon_{ij} + |\begin{bf} p\end{bf}^\varepsilon|
    \right)_+  + \vec W(y^\varepsilon)\cdot \begin{bf} p\end{bf}^\varepsilon
    \geq \lambda f(y^\varepsilon).
\end{equation}
We combine the two inequalities from above giving 
\begin{align} \label{periodic combo}
    \left(
-a^{ij}(\begin{bf} p\end{bf}^\varepsilon) \X^\varepsilon_{ij} + |\begin{bf} p\end{bf}^\varepsilon|
    \right)_+ 
    &-
\left(
    -a^{ij}(\begin{bf} p\end{bf}^\varepsilon) \Y^\varepsilon_{ij} + |\begin{bf} p\end{bf}^\varepsilon|
    \right)_+ \\
    &\leq 
    f(x^\varepsilon) - \lambda f(y^\varepsilon) + \left[\vec W(y^\varepsilon) - \vec W(x^\varepsilon) \right] \cdot \begin{bf} p\end{bf}^\varepsilon. \notag
\end{align}
We consider first the right hand side of \eqref{periodic combo}. To that end, observe that
\begin{align} \label{rhs}
    f(x^\varepsilon) - \lambda f(y^\varepsilon) &+ \left[\vec W(y^\varepsilon) - \vec W(x^\varepsilon) \right] \cdot \begin{bf} p\end{bf}^\varepsilon \notag \\
    &\leq 
    f(x^\varepsilon) - \lambda f(y^\varepsilon) + \left|\vec W(y^\varepsilon) - \vec W(x^\varepsilon) \right| |\begin{bf} p\end{bf}^\varepsilon| \notag \\
    &\leq f(x^\varepsilon) - \lambda f(y^\varepsilon) + L_{\vec W} |x^\varepsilon - y^\varepsilon\|\begin{bf} p\end{bf}^\varepsilon| \notag \\
    & = f(x^\varepsilon) - \lambda f(y^\varepsilon) + 4L_{\vec W} \frac{|x^\varepsilon - y^\varepsilon|^4}{\varepsilon}, 
\end{align} 
where we used the fact that $\vec W$ is a Lipschitz vector field with Lipschitz constant $L_{\vec W}$. 
Now, we deal with the left hand side of \eqref{periodic combo}. In the case where 
\begin{equation*}
\left(
    -a^{ij}(\begin{bf} p\end{bf}^\varepsilon) \Y^\varepsilon_{ij} + |\begin{bf} p\end{bf}^\varepsilon|
    \right)_+ = 0, 
\end{equation*}
we clearly have non-negativity of the left hand side. This with \eqref{rhs} gives the following inequality:
\begin{equation} \label{almost contrad}
    0\leq f(x^\varepsilon) - \lambda f(y^\varepsilon) + 4L_{\vec W} \frac{|x^\varepsilon - y^\varepsilon|^4}{\varepsilon}
\end{equation}
We now consider when 
\begin{equation*}
\left(
    -a^{ij}(\begin{bf} p\end{bf}^\varepsilon) \Y^\varepsilon_{ij} + |\begin{bf} p\end{bf}^\varepsilon|
    \right)_+ =
    -a^{ij}(\begin{bf} p\end{bf}^\varepsilon) \Y^\varepsilon_{ij} + |\begin{bf} p\end{bf}^\varepsilon|. 
\end{equation*}
Then, we have the basic estimate 
\begin{align*}
\left(
-a^{ij}(\begin{bf} p\end{bf}^\varepsilon) \X^\varepsilon_{ij} + |\begin{bf} p\end{bf}^\varepsilon|
    \right)_+ 
    &-
\left(
    -a^{ij}(\begin{bf} p\end{bf}^\varepsilon) \Y^\varepsilon_{ij} + |\begin{bf} p\end{bf}^\varepsilon|
    \right)_+  \notag \\
    \geq 
    -a^{ij}(\begin{bf} p\end{bf}^\varepsilon) \X^\varepsilon_{ij} + |\begin{bf} p\end{bf}^\varepsilon|
    &+a^{ij}(\begin{bf} p\end{bf}^\varepsilon) \Y^\varepsilon_{ij} - |\begin{bf} p\end{bf}^\varepsilon| \notag \\
    &=  -a^{ij}(\begin{bf} p\end{bf}^\varepsilon) [\X^\varepsilon - \Y^\varepsilon]_{ij}.
\end{align*}
It follows that 
\begin{equation}
    -a^{ij}(\begin{bf} p\end{bf}^\varepsilon) [\X^\varepsilon - \Y^\varepsilon]_{ij} \leq f(x^\varepsilon) - \lambda f(y^\varepsilon) + 4L_{\vec W} \frac{|x^\varepsilon - y^\varepsilon|^4}{\varepsilon}.
\end{equation}
By the key inequality \eqref{key ineq} in the Theorem of Sums, we deduce that $\X^\varepsilon\leq \Y^\varepsilon$. Indeed, 
\[
\begin{pmatrix}
        \X^\varepsilon & 0\\
        0 & -\Y^\varepsilon
    \end{pmatrix} 
    \leq D^2\varphi(x^\varepsilon,y^\varepsilon) + O(\kappa), 
\]
where the Hessian term annihilates vectors of the form $\begin{bmatrix} \xi & \xi \end{bmatrix}^T$ in the quadratic form, and when we take the trace of the inequality we have the desired result.
This along with the fact that $a^{ij}$ is non-negative definite, we have that left hand side is the trace of a non-positive definite matrix. 
Therefore,
\[
-a^{ij}(\begin{bf} p\end{bf}^\varepsilon) [\X^\varepsilon - \Y^\varepsilon]_{ij}\geq 0,
\]
for any $\varepsilon$. Hence, we arrive at \eqref{almost contrad}, that is, 
\[
0\leq f(x^\varepsilon) - \lambda f(y^\varepsilon) + 4L_{\vec W} \frac{|x^\varepsilon - y^\varepsilon|^4}{\varepsilon},
\]
and by sending $\varepsilon\to 0$ up to a subsequence we have
\begin{equation*}
0\leq f(\hat x) - \lambda f(\hat x) = (1-\lambda)f(\hat x)<0.
\end{equation*}
This is a clear contradiction, as $\lambda>1$ and $\hat x\in\T^n\setminus \cA$. Thus, we have proven the Comparison Principle for \eqref{periodic ergodic}.
\end{proof}
\medskip
How do we connect a solution of the Cauchy problem to a solution of the ergodic problem? It is standard to consider the half-relaxed limits of a solution to the former. We recall the definitions here. 
    Let $u\in C(\T^n\times(0,\infty))$ be the viscosity solution to \eqref{periodic cauchy}. The half-relaxed limits $u^+\in\USC(\T^n)$ and $u^-\in \LSC(\T^n)$ are defined as
    \[
    u^+(x) := \limsups\limits_{t\to\infty}{u(x,t)} = \lim_{t\to\infty} \sup\left\{u(y,s):|x-y|\leq\frac{1}{s}, s\geq t\right\}
    \]
    and 
    \[
    u^-(x) := \liminfs\limits_{t\to\infty}{u(x,t)} = \lim_{t\to\infty} \inf\left\{u(y,s):|x-y|\leq\frac{1}{s}, s\geq t\right\}.
    \]
The idea is to apply Lemma \ref{comparison principle} to the half-relaxed limits of $u$, which will in turn prove that the large-time limit exists on all of $\T^n$, as desired. The following proposition shows why this is possible. 
\begin{prop} \label{relaxed limit}
    Let $u^+,u^-$ be the half-relaxed limits of $u$, the unique viscosity solution to \eqref{periodic cauchy}, described above. Then, $u^+, u^-$ are a viscosity subsolution and supersolution to \eqref{periodic ergodic}, respectively. 
\end{prop}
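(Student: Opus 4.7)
The plan is to realize $u^+$ and $u^-$ as the half-relaxed limits of the time-translated family $\{u(\cdot,\cdot+n)\}_{n\in\N}$ and then appeal to the classical Barles--Perthame stability of viscosity sub- and supersolutions under half-relaxed limits. The preliminary requirement is to verify that $u$ is uniformly bounded on $\T^n\times[0,\infty)$ so that $u^+$ and $u^-$ are real-valued. The lower bound is immediate: since $f\geq 0$, the constant $\min_{\T^n}g$ is a classical (hence viscosity) subsolution of \eqref{periodic cauchy}, so comparison for the Cauchy problem gives $u\geq \min g$. For the upper bound, the Aubry-set monotonicity recorded in the earlier Remark forces $u(x_0,t)\leq g(x_0)\leq \max g$ for every $x_0\in\cA$, and this control is propagated to all of $\T^n$ by comparison with a stationary supersolution built from $\dist(\cdot,\cA)$ together with the Lipschitz regularity of $\vec W$ and $f$. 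With boundedness in hand, $u^+\in\USC(\T^n)$ and $u^-\in\LSC(\T^n)$ are bounded.

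For each $n\in\N$, set $v_n(x,t):=u(x,t+n)$. Because \eqref{periodic cauchy} is autonomous in $t$, each $v_n$ is simultaneously a viscosity sub- and supersolution of the parabolic PDE on $\T^n\times(-n,\infty)$, and the family $\{v_n\}$ is uniformly bounded. A direct computation from the definitions of the half-relaxed limits gives
\[
\limsups_{n\to\infty} v_n(x,t) \,=\, u^+(x),
\qquad
\liminfs_{n\to\infty} v_n(x,t) \,=\, u^-(x),
\]
both independent of $t\in\R$. The classical stability theorem for viscosity sub- and supersolutions under half-relaxed limits (see \cite{user}) then yields that the constant-in-$t$ functions $(x,t)\mapsto u^+(x)$ and $(x,t)\mapsto u^-(x)$ are, respectively, a viscosity subsolution and supersolution of the full parabolic operator in \eqref{periodic cauchy} on $\T^n\times\R$.

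It remains to drop the $t$-dependence. Given any $\varphi\in C^2(\T^n)$ such that $u^+-\varphi$ attains a local max at $\hat x$, the function $\tilde\varphi(x,t):=\varphi(x)$ is a valid parabolic test function at $(\hat x,t_0)$ for every $t_0\in\R$ with $\tilde\varphi_t\equiv 0$; the subsolution inequality becomes
\[
\bigl(-a^{ij}(D\varphi(\hat x))(D^2\varphi(\hat x))_{ij} + |D\varphi(\hat x)|\bigr)_+ + \vec W(\hat x)\cdot D\varphi(\hat x) \,\leq\, f(\hat x),
\]
which is exactly the subsolution condition for \eqref{periodic ergodic} at $\hat x$. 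The supersolution inequality for $u^-$ follows by the symmetric argument, using $\liminfs$ in place of $\limsups$ and a spatial test function touching $u^-$ from below. The main obstacle in executing this plan is the preliminary uniform bound on $u$; once this is secured, the remaining steps are routine applications of the half-relaxed-limits machinery, the singular behavior of $a^{ij}$ at $p=0$ being absorbed into the semicontinuous envelopes built into the viscosity formulation, and the continuity of $s\mapsto s_+$ presenting no additional difficulty.
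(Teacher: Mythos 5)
The stability half of your argument — shift in time to the family $v_n(x,t)=u(x,t+n)$, apply the Barles--Perthame half-relaxed-limits theorem, then drop $t$ by using test functions $\tilde\varphi(x,t)=\varphi(x)$ with $\tilde\varphi_t\equiv 0$ — is correct and is precisely what the paper delegates to \cite{user} and \cite{dynamical} in its one-line proof, so on that part you and the paper are doing the same thing.

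The substantive issue is the one you yourself flag as the ``main obstacle'': a uniform bound on $u$ over $\T^n\times[0,\infty)$, without which $u^+$ and $u^-$ need not be finite-valued and Lemma \ref{comparison principle} cannot later be invoked. Your lower bound is fine: since $f\geq 0$ and the relevant semicontinuous envelope of the cutoff operator vanishes at $p=0$, $X\geq 0$, the constant $\min g$ is a viscosity subsolution. Your upper bound, however, is a single unsupported sentence. The constant $\max g$ fails to be a supersolution wherever $f>0$, and a barrier of the form $\max g + C\dist(\cdot,\cA)$ is not obviously one either: the mean curvature term $-a^{ij}(D\dist)(D^2\dist)_{ij}$ can be large and negative where the level sets of the distance function bulge away from $\cA$, in which case the cutoff $(\cdot)_+$ kills the entire nonlinear contribution and the remaining term $\vec W\cdot D\Phi$ has no reason to dominate $f$; moreover, since $f$ is only assumed continuous, its modulus of continuity near $\cA$ may decay more slowly than the linear rate of the distance function, so the construction is delicate even in the first-order, no-wind case. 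Monotonicity of $u$ on $\cA$ by itself does not propagate a global bound without a genuine supersolution barrier. The paper glosses over this precondition entirely, so your version at least isolates the real difficulty, but it does not resolve it; a complete argument would need to actually produce a bounded stationary supersolution of \eqref{periodic ergodic}, or obtain the $L^\infty$ bound some other way.
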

In fact, this is known as the stability of viscosity solutions, which makes the theory so successful. We refer the reader to \cite{user} and \cite{dynamical} (Theorem $7.21$, Section $7.5$) for more information on the stability of viscosity solutions.
We now have the machinery to prove Theorem \ref{periodic largetime}. 
\begin{proof}[Proof of Theorem \ref{periodic largetime}]
    Recall from Proposition \ref{relaxed limit} that $u^+$ and $u^-$ are a viscosity subsolution and supersolution to \eqref{periodic ergodic}, respectively. Observe also that by the monotonicity property of the Aubry set $\cA$, we have that $u^+=u^-$ on $\cA$. Therefore, by Lemma \ref{comparison principle} we conclude that indeed $u^+\leq u^-$ in $\T^n$. By definition, $u^-\leq u^+$. Hence, $u^-=u^+$. 
\end{proof}

\section{Radially symmetric setting}

In this section, we always assume $f(x) = F(|x|)$, $g(x) = G(|x|)$ for $F,G\in C^1([0,\infty))$. Together with $\vec W\equiv \vec 0$, the Cauchy problem \eqref{cauchy} becomes    
\begin{equation}\label{radcauchy} \tag{C$_{rad}$}
    \left\{
\begin{aligned}
    &U_t + \left(- \frac{(n-1)}{r}U_r + |U_r|\right)_+ = F(r) &&\text{ in } (0,\infty)\times(0,\infty),\\
    &U(r,0) = G(r) &&\text{ on } (0,\infty).
\end{aligned}
\right.
\end{equation}
Observe that this is a singular first-order Hamilton-Jacobi equation with Hamiltonian $H: (0,\infty)\times\R\to \R$ defined by 
\begin{equation*}
    H(r,p): = \left( -\frac{n-1}{r}p +|p|\right)_+ -F(r).
\end{equation*}
We remark that this Hamiltonian is convex in $p$, and so by standard Hamilton-Jacobi theory we have the following optimal control representation formula for the unique viscosity solution to \eqref{radcauchy}:
\begin{equation} \label{rep formula}
    U(r,t) = \inf\left\{ \int_0^t L(\gamma(z),\dot\gamma(z)) \,dz + G(\gamma(0)) : \gamma\in \AC \text{ with } \gamma(t) = r \right\},
\end{equation}
where $L:(0,\infty)\times\R\to\R$ is the Lagrangian of the system. For more information on this representation formula, we refer the reader to Chapter 2 of \cite{tran}. Recall for convex Hamiltonians, the Lagrangian is given by the Legendre transform. That is, 
\begin{equation*}
    L(r,v) = \sup_{p\in\R} \left\{ pv - H(r,p)\right\} = \sup_{p\in\R} \left\{ pv - \left( -\frac{(n-1)}{r}p +|p|\right)_+ + F(r) \right \}.
\end{equation*}
We will now compute the Lagrangian explicitly. We break the domain $(0,\infty)$ into two regions: $(0,n-1]$ and $(n-1,\infty)$. 

For $\gamma(z)\in(0,n-1]$, the Lagrangian $L$ reduces to 
\begin{equation*}
    L(\gamma(z),\dot \gamma(z)) = \left\{
    \begin{aligned}
        &+\infty \quad \text{if}\quad \dot\gamma(z)<-1-\frac{(n-1)}{\gamma(z)},\\
        &F(\gamma(z)) \quad \text{if} \quad -1-\frac{(n-1)}{\gamma(z)}\leq \dot\gamma(z)\leq 0, \\
        &+\infty \quad \text{if} \quad \dot\gamma(z)>0.
    \end{aligned}
    \right.
\end{equation*}
Now for $\gamma(z)\in(n-1,\infty)$, the Lagrangian reduces to 
\begin{equation*}
    L(\gamma(z),\dot\gamma(z)) = \left\{
    \begin{aligned}
        &+\infty \quad \text{if} \quad \dot\gamma(z)<-1 - \frac{(n-1)}{\gamma(z)},\\
        &F(\gamma(z)) \quad \text{if} \quad -1 - \frac{(n-1)}{\gamma(z)}\leq \dot\gamma(z)\leq 1 - \frac{(n-1)}{\gamma(z)},\\
        &+\infty \quad \text{if} \quad \dot\gamma(z)>1 - \frac{(n-1)}{\gamma(z)}.
    \end{aligned}
    \right.
\end{equation*}
Of course, as the value function is minimizing this action, it will avoid any trajectories that make the Lagrangian infinite. Therefore, we can exclude these inadmissible trajectories to write a more useful representation formula. 


Note that in the region $(0,n-1]$, in order for the Lagrangian to be finite, the velocity $\dot\gamma(z)$ is only able to be non-positive. Geometrically, if we fix $r^*\in (0,n-1]$ and consider the value function $U(r^*,t)$, then this implies that admissible trajectories can only be drawn from the right of $\gamma(t) = r^*$. See Figure \ref{fig:fromright}.

\begin{figure}[htp]
\centering
\caption{Trajectories from the right.}
\label{fig:fromright}
\begin{tikzpicture}[
    >=Stealth,
    every path/.style={thick},
    dot/.style={circle, fill=black, inner sep=1.5pt},
    decoration={
        markings,
        mark=at position 0.7 with {\arrow{>}} 
    }
]

\coordinate (r) at (2,4); 
\node[dot, label=above:{$(r^*,t)$}] at (r) {};

\draw[->] (0,0) -- (12,0) node[right]{$r$}; 
\draw[->] (0,0) -- (0,5) node[above]{time $z$};  

\draw[dashed] (0,4) -- (2,4) ;
\node[left] at (0,4) {$t$};
\draw[dashed] (2,0) -- (2,4);

\draw[postaction={decorate}, black] 
    (4,0) to[out=100, in=350] (r);

\draw[postaction={decorate}, black] 
    (4.5,0) to[out=95, in= 280] (r);

\draw[postaction={decorate}, black] 
    (7.5,0) to[out=120, in=0] (3.3,3.5) to[out=180, in=275] (r);


\draw[postaction={decorate}, black] 
        (2.5,0) to[out=93, in= 270]  (r);

\draw[postaction={decorate}, black] 
    (3.5,0) to[out=90, in=345] (r);

\draw[postaction={decorate}, black] 
    (5,0) to[out=90, in=0] (r);

\draw[postaction={decorate}, black] 
    (9,2) to[out=134, in=0] (r);

\draw[postaction={decorate}, black] 
    (9,1.2) to[out=160, in=0] (r);



\draw (2,0.1) -- (2,-0.1) node[below] {$r^*$};

\draw (9,0.1) -- (9,-0.1) node[below] {$n-1$};
\draw[dotted] (9,0) -- (9,5);

\draw (0,0.1) -- (0,-0.1) node[below] {$0$};


\end{tikzpicture}
\end{figure}
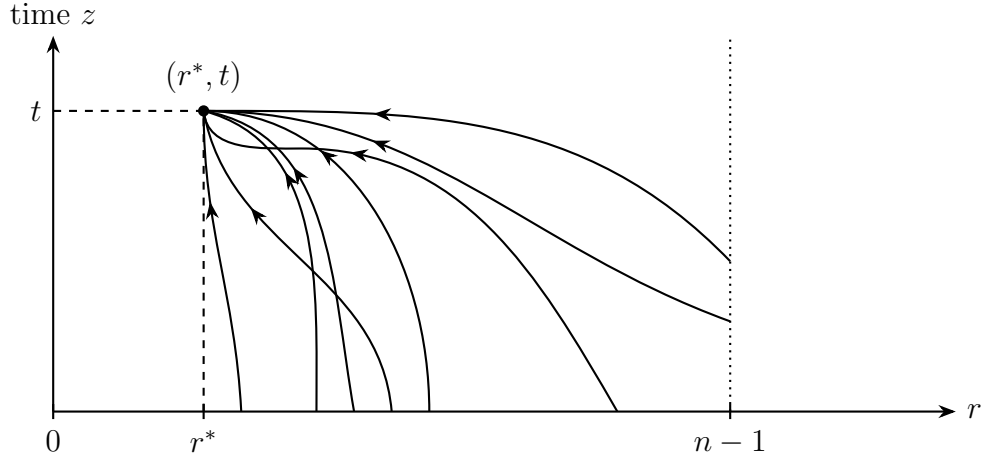

In $(n-1,\infty)$, the allowed velocities can be zero, positive, and negative. So, for $r^*\in(n-1,\infty)$ and value function $U(r^*,t)$, trajectories can be drawn from the left and right of $\gamma(t)= r^*$. However, trajectories starting in or crossing into $(0,n-1]$ are inadmissible for reasons above. 
    
Next, we aim to further eliminate inadmissible trajectories by studying local \emph{speed limits}. For $\gamma(z)\in(0,n-1]$, we have 
\begin{equation*}
    -1 - \frac{(n-1)}{\gamma(z)} \leq \dot\gamma(z) \leq 0
\end{equation*}
for allowed velocities. As $\gamma(z)\to0$, the lower bound tends to $-\infty$.  This means as the trajectory nears the origin, it may speed up. On the other hand, we see that for $\gamma(z)\in(n-1,\infty)$, we have that the allowed velocities are
\begin{equation*}
    -1 - \frac{(n-1)}{\gamma(z)} \leq \dot\gamma(z) \leq 1-\frac{(n-1)}{\gamma(z)}.
\end{equation*}
Furthermore, if $\dot\gamma(z)<-2$ or $\dot\gamma(z)>1$, then the Lagrangian is guaranteed to become infinite. This \emph{speed limit} provides a \emph{cone of influence}. In fact, let $r^*\in(n-1,\infty)$, and consider the value function $U(r^*,t)$. Observe that a trajectory $\gamma_1(z)$ with $\gamma_1(0) = r^* - t$ and $\dot\gamma_1\equiv 1$ will reach $r^*$ at time $t$. Hence, if $r^* - t> n-1$, then any trajectory $\gamma$ beginning at radius $r\leq r^* - t$ is inadmissible. If $r^*- t\leq n-1$, then any trajectory $\gamma$ with initial condition in $(n-1,r^*]$ will be admissible. Now consider a trajectory $\gamma_2(z)$ with $\gamma_2(0) = r^*+2t$ and $\dot \gamma_2\equiv-2$. We will have that $\gamma_2(t) = r^*$. Therefore, for any trajectory $\gamma$ with $\gamma(0)\geq r^*+2t $ is inadmissible. We refer the reader to Figure \ref{fig:cone fig} to see the \emph{cone of influence} in $(n-1,\infty)$. 

\begin{figure}[ht]
\centering
\caption{Cone of admissible trajectories.}
\label{fig:cone fig}
\begin{tikzpicture}[
    >=Stealth,
    every path/.style={thick},
    dot/.style={circle, fill=black, inner sep=1.5pt},
    decoration={
        markings,
        mark=at position 0.7 with {\arrow{>}} 
    }
]

\coordinate (r) at (5,4); 
\node[dot, label=above:{$(r^*,t)$}] at (r) {};

\draw[->] (0,0) -- (12,0) node[right]{$r$}; 
\draw[->] (0,0) -- (0,5) node[above]{time $z$};  

\draw[dashed] (0,4) -- (5,4) ;
\node[left] at (0,4) {$t$};
\draw[dashed] (5,0) -- (5,4);

\draw[postaction={decorate}, black] 
    (4,0) to[out=100, in=275] (r);

\draw[postaction={decorate}, black] 
    (8,0) to[out=95, in= 280] (r);

\draw[postaction={decorate}, black] 
    (7.5,0) to[out=120, in=260] (r);


\draw[postaction={decorate}, black] 
        (3,0) to[out=93, in= 270]  (r);

\draw[postaction={decorate}, black] 
    (3.5,0) to[out=90, in=325] (r);

\draw[postaction={decorate}, black] 
    (6,0) to[out=90, in=240] (r);

\draw[postaction={decorate}, black] 
    (9,0) to[out=134, in=300] (r);

\draw[postaction={decorate}, black] 
    (9.5,0) to[out=160, in=315] (r);



\draw (5,0.1) -- (5,-0.1) node[below] {$r^*$};

\draw (2.5,0.1) -- (2.5,-0.1) node[below] {$r^* - t$};
\draw[dashed,blue] (2.5,0) -- (5,4);

\draw (10,0.1) -- (10,-0.1) node[below] {$r^* + 2 t$};
\draw[dashed,blue] (10,0) -- (5,4);

\draw (1,0.1) -- (1,-0.1) node[below] {$n-1$};
\draw[dotted] (1,0) -- (1,5);

\draw (0,0.1) -- (0,-0.1) node[below] {$0$};


\end{tikzpicture}
\end{figure}
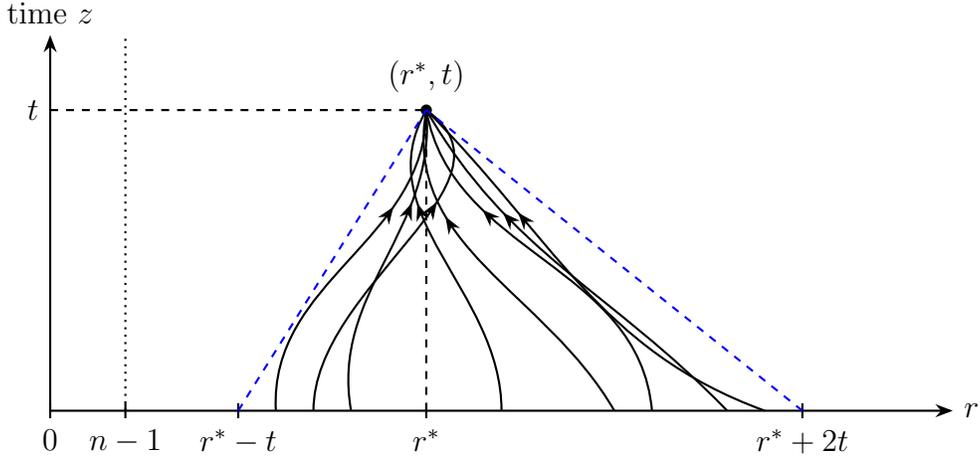

We use similar notation from \cite{remarks} to denote the class of admissible trajectories to this optimal control problem. Indeed, set 
\begin{align} \label{admissible class}
    \mathcal{C}(t,0;r,s): &= \bigg\{
    \gamma\in AC([0,t];(0,\infty)): \gamma(t) = r, \gamma(0) = s, \text{ for a.e. } z\in(0,t) \\
    & -1 - \frac{(n-1)}{\gamma(z)} \leq \dot \gamma(z)\leq \left(1 - \frac{(n-1)}{\gamma(z)} \right)\chi_{(n-1,\infty)}(\gamma(z))
    \bigg\}. \notag
\end{align}
Thus, we can refine \eqref{rep formula} into 
\begin{align} \label{refined rep formula}
    U(r,t) = \inf_{\gamma\in\mathcal{C}(t,0;r,s),s\in(0,\infty)}\left\{ 
\int_0^t F(\gamma(z))\,dz+ G(\gamma(0))\right\}.
\end{align}

As in the periodic setting, we consider the corresponding ergodic problem and study the behavior of solutions. Under radial symmetry, the ergodic problem is 
\begin{equation} \label{radial ergodic} \tag{E$_{rad}$}
    \left(-\frac{(n-1)}{r}V_r + |V_r| \right)_+ = F(r) \quad \text{ in } (0,\infty).
\end{equation}
Moreover, we have a similar definition for the Aubry set, i.e., 
\begin{equation*}
     \cA_{rad}: = \{r\in(0,\infty):F(r) = 0\}.
\end{equation*}
By \eqref{R4},
\begin{equation*}
    \cA _{rad} \neq \emptyset.
\end{equation*}
\begin{rem}
    We have that $\cA_{rad}$ serves as a monotonicity set for $U$. Indeed, by using the representation formula \eqref{refined rep formula} we have for fixed $r^*\in\cA_{rad}$ and $t_1\leq t_2 $ with $t_1,t_2\in(0,\infty)$ that
    \begin{equation} \label{monotonicity}
        U(r^*,t_2) \leq U(r^*,t_1).
    \end{equation}
To see \eqref{monotonicity}, we consider the value function at $(r^*,t_2)$, i.e., 
\begin{align*}
    U(r^*,t_2) &= \inf_{\gamma\in\mathcal{C}(t,0;r^*,s),s\in(0,\infty)}\left\{ 
\int_0^{t_2} F(\gamma(z))\,dz+ G(\gamma(0))\right\} \\
    & = \inf_{\gamma\in\mathcal{C}(t,0;r^*,s),s\in(0,\infty)} \left\{ 
    \int_0^{t_1} F(\gamma(z))\,dz + \int_{t_1}^{t_2} F(\gamma(z))\,dz + G(\gamma(0))\right\}.
\end{align*}
In order to relate this to the value $U(r^*,t_1)$, we restrict the class of admissible trajectories by only considering ones that travel to $r^*$ in time $t_1$ and remain there until time $t_2$. This gives 
\begin{align*}
    U(r^*,t_2)&\leq \inf_{\gamma\in\mathcal{C}(t,0;r^*,s),s\in(0,\infty)} \bigg\{ 
    \int_0^{t_1} F(\gamma(z))\,dz + \int_{t_1}^{t_2} F(\gamma(z))\,dz + G(\gamma(0))\bigg|\\
    & \qquad \qquad \gamma(z)=r^* \quad  \forall \, t_1\leq z\leq t_2
    \bigg\} \\
    & = \inf_{\gamma\in\mathcal{C}(t,0;r^*,s),s\in(0,\infty)} \left\{ 
    \int_0^{t_1} F(\gamma(z))\,dz + \int_{t_1}^{t_2} F(r^*)\,dz + G(\gamma(0))\right\} \\
    & = \inf_{\gamma\in\mathcal{C}(t,0;r^*,s),s\in(0,\infty)} \left\{ 
    \int_0^{t_1} F(\gamma(z))\,dz + \int_{t_1}^{t_2} 0\,dz + G(\gamma(0))\right\} \\
    & = U(r^*,t_1).
\end{align*}
It is important that $r^*\in\cA_{rad}$, so that the strategy of waiting at the terminal point does not add to the running cost. Therefore, we see that the map $t\mapsto U(r,t)$ for $r\in\cA_{rad}$ is non-increasing.
\end{rem}

Again, we require the Comparison Principle for the ergodic problem in order to establish the large-time limit, i.e., Theorem \ref{lt behavior rad}. As the behavior of the viscosity subsolutions and supersolutions vary with respect to the radius, we will divide and conquer by establishing the Comparison Principle for different regions of $\R^+$. To that end, we define the following important radii:
\begin{align*} 
    &S_0:= \min\{r\leq n-1: r\in\cA_{rad}\}, \quad S_1:= \max\{r\leq n-1:r\in\cA_{rad}\}, \\
    &R_0:= \min\{r\geq n-1: r\in\cA_{rad}\}, \quad R_1:= \max\{r\geq n-1:r\in\cA_{rad}\}. 
\end{align*}

\begin{lem} \label{2nd radial CP}
    Assume \eqref{R1}--\eqref{R4}. Let $V,W\in C([0,\infty))$ be a viscosity subsolution and supersolution to \eqref{radial ergodic}, respectively, satisfying 
    \begin{equation} \label{growth rate of solns}
        \lim_{r\to \infty} \frac{V(r)}{r} = \lim_{r\to \infty} \frac{W(r)}{r} = c_F>0.
    \end{equation}
    Furthermore, assume that $V\leq W$ on $\cA_{rad}$. Then, we have that $V\leq W$ in $(0,\infty)$.
\end{lem}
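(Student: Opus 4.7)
The plan is to mimic the rescaling-and-doubling structure of Lemma~\ref{comparison principle}, with two modifications dictated by the radial setting. On the one hand, \eqref{radial ergodic} is \emph{first-order}, so the Crandall--Ishii apparatus is unnecessary and ordinary super/subdifferentials suffice; on the other, the domain $(0,\infty)$ is non-compact and has a singular boundary at $r=0$, so extra care is needed to localize the doubling argument. Since the equation has no zeroth-order dependence on $V$, I would begin by adding a common constant to both functions, reducing without loss of generality to the case $V, W \geq 0$. Then the hypothesis $V\leq W$ on $\cA_{rad}$ yields $V - \lambda W \leq (1-\lambda)W \leq 0$ on $\cA_{rad}$ for every $\lambda>1$, so the Aubry set is automatically excluded from any positive maximum of $V - \lambda W$. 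Suppose for contradiction that $\sup_{(0,\infty)}(V-W) > 0$. As in the periodic case, rescale by $W^\lambda := \lambda W$ for $\lambda>1$ close to $1$; the $1$-homogeneity of the Hamiltonian in $p$ makes $W^\lambda$ a supersolution of $H(r,V_r) = \lambda F(r)$, and the same upper-semicontinuity claim used in Lemma~\ref{comparison principle} ensures $\sup_{(0,\infty)}(V - W^\lambda) > 0$ persists.

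To control both endpoints of $(0,\infty)$ simultaneously, I would next introduce the penalized doubling function
\begin{equation*}
\Psi^{\varepsilon,\delta}(r,s) := V(r) - W^\lambda(s) - \frac{|r-s|^4}{\varepsilon} - \delta\left(\frac{1}{r} + \frac{1}{s}\right)
\end{equation*}
on $(0,\infty)\times(0,\infty)$. The $\delta$-penalty pushes any maximizer away from the boundary $r=0$, while the growth hypothesis~\eqref{growth rate of solns} forces $V(r) - \lambda W(s) \sim (1-\lambda)c_F\, r \to -\infty$ as $r,s\to\infty$ with $|r-s|$ bounded, so maximizers also stay bounded above. For sufficiently small $\delta$ and $\varepsilon$ one has $\max \Psi^{\varepsilon,\delta} > 0$, attained at some interior $(r_{\varepsilon,\delta}, s_{\varepsilon,\delta})$, and the standard penalty-vanishing and compactness steps of Lemma~\ref{comparison principle} give $|r_{\varepsilon,\delta} - s_{\varepsilon,\delta}|^4/\varepsilon \to 0$ together with $(r_{\varepsilon,\delta}, s_{\varepsilon,\delta}) \to (\hat r_\delta, \hat r_\delta) \in (0,\infty)\setminus \cA_{rad}$ along a subsequence as $\varepsilon\to 0$.

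Finally, I would apply the subsolution test at $r_{\varepsilon,\delta}$ with gradient $p^{sub} := 4(r_{\varepsilon,\delta}-s_{\varepsilon,\delta})^3/\varepsilon - \delta/r_{\varepsilon,\delta}^2$ and the supersolution test at $s_{\varepsilon,\delta}$ with gradient $p^{sup} := 4(r_{\varepsilon,\delta}-s_{\varepsilon,\delta})^3/\varepsilon + \delta/s_{\varepsilon,\delta}^2$, then subtract. Splitting on whether the supersolution's $(\cdot)_+$ is active or zero (exactly as in the periodic case): in the inactive case one reads off $\lambda F(s_{\varepsilon,\delta}) \leq 0$, which forces $\hat r_\delta \in \cA_{rad}$ in the limit, contradicting the previous step; in the active case the Hamiltonian difference reduces to
\begin{equation*}
\frac{4(n-1)(r_{\varepsilon,\delta}-s_{\varepsilon,\delta})^4}{\varepsilon\, r_{\varepsilon,\delta}\, s_{\varepsilon,\delta}} + O(\delta) \;\leq\; F(r_{\varepsilon,\delta}) - \lambda F(s_{\varepsilon,\delta}),
\end{equation*}
and passing to the limit first in $\varepsilon\to 0$ and then in $\delta\to 0$ produces $0 \leq (1-\lambda)F(\hat r) < 0$, the desired contradiction. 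The hardest step will be handling the residue from the $r$-dependent coefficient $-(n-1)/r$ together with the $\delta$-correction in the gradients, and checking that both limits commute cleanly in the regime $r\leq n-1$ where $H$ degenerates and only $(-V_r)_+$ is effectively constrained.
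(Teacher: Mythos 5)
Your proof is essentially correct but takes a genuinely different route from the paper's. Where the paper partitions $(0,\infty)$ into three regions --- the intervals between consecutive Aubry points (Step~1), the unbounded tail $(R_1,\infty)$ (Step~2), and the punctured neighborhood $(0,S_0)$ of the origin (Step~3) --- and runs a separate doubling argument on each, you carry out a single doubling over the full half-line, using the penalty $\delta(1/r+1/s)$ to localize away from $r=0$ and the growth condition \eqref{growth rate of solns} to bound maximizers from above. Your normalization $W\geq 0$ is a genuine improvement worth highlighting: it makes explicit that a positive maximum of $V-\lambda W$ cannot occur on $\cA_{rad}$ (since $V-\lambda W\leq(1-\lambda)W\leq 0$ there), whereas the paper tacitly assumes this when it asserts its maximizer $r_1$ lies in the \emph{open} interval $(a,b)$ in Step~1 and when it claims $\hat r\in(0,S_0)$ in Step~3 --- both claims can fail if $W$ changes sign, and your preliminary shift removes the ambiguity. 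Your combined estimate $\frac{4(n-1)|r-s|^4}{\varepsilon\,rs}+O(\delta)\leq F(r)-\lambda F(s)$ is the exact analogue of the paper's $A+B+C+D$ decomposition, and your treatment of the inactive-cutoff case (deducing $F(s^\varepsilon)=0$, hence $\hat r_\delta\in\cA_{rad}$, contradiction) mirrors the paper. You are right to flag the ordering of $\varepsilon\to0$ then $\delta\to0$ near $r=0$ as the delicate step: since the $\varepsilon$-limit point $\hat r_\delta$ depends on $\delta$, one must verify that $\hat r_\delta$ stays bounded away from the origin as $\delta\to0$ so the terms $O(\delta)/\hat r_\delta^{\,2}$ and $O(\delta)/\hat r_\delta^{\,3}$ actually vanish. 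This concern is inherent to the problem --- the paper's Step~3 sends $\delta\to0$ in exactly the same way without addressing it --- and closing it cleanly requires arguing that the maximum of $V-\lambda W$ over a neighborhood of $0$ cannot concentrate at $r=0$ itself, either via the structure of the Hamiltonian near $r=0$ or by an additional normalization. So your plan buys a shorter, more uniform argument and a cleaner handling of the Aubry set, at the cost of having to control both boundary behaviors (at $0$ and at $\infty$) simultaneously rather than one region at a time.
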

\begin{proof}

Recall the definitions of $S_0,S_1,R_0,R_1$. We outline the major steps in the proof. 
\begin{enumerate}
    \item Prove the comparison principle on possible non-empty closed intervals with boundary lying in $\cA_{rad}$. In particular, the interval $(S_1,R_0)$.
    \item Prove the comparison principle on $(R_1,\infty)$
    \item Prove the comparison principle on $(0,S_0)$
\end{enumerate}

To that end, we begin with \emph{Step $1$}.

    \medskip \begin{bf}
        Step 1: 
    \end{bf}
    Let $a,b\in\cA_{rad}$ such that $(a,b)\not\subset\cA_{rad}$. By the hypothesis, we have that $V(a)\leq W(a)$ and $V(b)\leq W(b)$. Now suppose for contradiction that
    \begin{equation*}
        \max_{r\in[a,b]} \left\{ V(r) - W(r)\right\} >0.
    \end{equation*}
    By continuity, for $\lambda>1$ sufficiently close to one we have 
    \begin{equation*}
        \max_{r\in[a,b]} \left\{ V(r) - \lambda W(r)\right\} = V(r_1) - \lam W(r_1)>0,
    \end{equation*}
    for some $r_1 \in (a,b)$. Denote $W^\lambda(r) := \lambda W(r)$. We now use the doubling variables method and consider an auxiliary function $\Phi^\varepsilon:[a,b]^2\to\R$ defined by
    \begin{equation*}
        \Phi^\varepsilon(r,s): = V(r) - W^\lambda(s) - \frac{|r-s|^2}{\varepsilon}.
    \end{equation*}
    For $\varepsilon>0$ sufficiently small, we have a maximizer $(r^\varepsilon,s^\varepsilon)\in(a,b)^2$ of $\Phi^\varepsilon$ with a positive maximum. By compactness, we can extract a convergent subsequence, denoted by $\varepsilon$. In a similar manner as in the periodic case, we have that 
    \begin{equation} \label{conv subseq}
    \frac{|r^\varepsilon - s^\varepsilon|^2}{\varepsilon}\to 0 \quad \text{and}\quad (r^\varepsilon,s^\varepsilon)\to(\hat r,\hat r)\in(a,b)^2 \quad \text{up to a subsequence.}
    \end{equation}
    Fixing $s^\varepsilon$, we have the map $V(r) - \frac{|r-s^\varepsilon|^2}{\varepsilon}$ has a maximum at $r^\varepsilon$, and so by the viscosity subsolution test we have 
    \begin{equation} \label{ab sub test}
        \left(-\frac{(n-1)}{r^\varepsilon}\left(\frac{2(r^\varepsilon - s^\varepsilon)}{\varepsilon}\right) + \left|\frac{2(r^\varepsilon - s^\varepsilon)}{\varepsilon}\right|\right)_+\leq F(r^\varepsilon).
    \end{equation}
    Now fixing $r^\varepsilon$, we have $W^\lambda(s)-\left(\frac{-|r^\varepsilon - s|^2}{\varepsilon}\right)$ has a minimum at $s^\varepsilon$, and by the viscosity supersolution test we have 
    \begin{equation} \label{ab super test}
        \left(-\frac{(n-1)}{s^\varepsilon}\left(\frac{2(r^\varepsilon - s^\varepsilon)}{\varepsilon}\right) + \left|\frac{2(r^\varepsilon - s^\varepsilon)}{\varepsilon}\right|\right)_+\geq \lambda F(s^\varepsilon).
    \end{equation} \medskip
Denote $p^\varepsilon: = \frac{2(r^\varepsilon - s^\varepsilon)}{\varepsilon}$. Combining \eqref{ab sub test} and \eqref{ab super test} gives
\begin{equation} \label{combination}
    \left(-\frac{(n-1)}{r^\varepsilon}p^\varepsilon + \left|p^\varepsilon\right|\right)_+ - \left(-\frac{(n-1)}{s^\varepsilon}p^\varepsilon + \left|p^\varepsilon\right|\right)_+ \leq F(r^\varepsilon) - \lambda F(s^\varepsilon)
\end{equation}
We claim that the left hand side of \eqref{combination} is non-negative. Indeed, if the second cutoff term is zero, then we have immediately the claim. Now suppose that the second cutoff term is positive. We then give a lower bound by estimating the first cutoff term by its argument. Explicitly, 
\begin{align*}
    \left(-\frac{(n-1)}{r^\varepsilon}p^\varepsilon + \left|p^\varepsilon\right|\right)_+ &- \left(-\frac{(n-1)}{s^\varepsilon}p^\varepsilon + \left|p^\varepsilon\right|\right)_+ \\
    &= \left(-\frac{(n-1)}{r^\varepsilon}p^\varepsilon + \left|p^\varepsilon\right|\right)_+ +\frac{(n-1)}{s^\varepsilon}p^\varepsilon - \left|p^\varepsilon\right| \\
    &\geq -\frac{(n-1)}{r^\varepsilon}p^\varepsilon + \left|p^\varepsilon\right| +\frac{(n-1)}{s^\varepsilon}p^\varepsilon - \left|p^\varepsilon\right| \\
    & = 2(n-1)\frac{(r^\varepsilon - s^\varepsilon)}{\varepsilon}\left(\frac{1}{s^\varepsilon} -\frac{1}{r^\varepsilon} \right) = \frac{2(n-1)}{r^\varepsilon s^\varepsilon}\frac{(r^\varepsilon - s^\varepsilon)^2}{\varepsilon}.
\end{align*}
Using the fact that we are considering a closed interval, we can estimate $r^\varepsilon s^\varepsilon \leq b^2$. Therefore, we have that 
\begin{equation*}
    \frac{2(n-1)}{b^2}\frac{(r^\varepsilon - s^\varepsilon)^2}{\varepsilon} \leq F(r^\varepsilon) - \lambda F(s^\varepsilon).
\end{equation*}
By \eqref{conv subseq} and the continuity of $F$, we know that 
\begin{equation*}
    0 \leq  (1-\lambda)F(\hat r)<0.
\end{equation*}
This is a contradiction, and so \emph{Step $1$} is proven, i.e., we must have that $V(r)\leq W(r)$ for all $r\in(a,b)$.

\medskip \begin{bf}
Step 2:
\end{bf}
By the hypothesis, we have that $V(R_1)\leq W(R_1)$. We claim that we must have that $V(r)\leq W(r)$ for all $r>R_1$. 
Suppose for contradiction that this is not the case, that is, there exists a radius $R_2>R_1$ such that 
\begin{equation*}
    V(R_2) - W(R_2) = \theta >0.
\end{equation*}
First note that for $\lambda>1$ close enough to one, we have 
\begin{equation} \label{first choice lam}
    V(R_2) - W^\lambda(R_2) \geq \frac{\theta}{4}.
\end{equation}
Since this is an unbounded domain, we make use of the equal linear growth rates of both $V$ and $W$ \eqref{growth rate of solns}. With our choice of $\lambda>1$, we also have
\begin{equation*}
    \lim_{r\to\infty} \frac{V(r)}{r} = c_F <\lambda c_F = \lim_{r\to \infty}\frac{W^\lambda(r)}{r}.
\end{equation*}
This, in turn, implies that we can find a $R_\infty>0$ large enough such that 
\begin{equation} \label{growth inequality}
    V(r) + 1 < W^\lambda(r) \text{ for all } r> R_\infty.
\end{equation}
We now consider the auxiliary function $\Phi^\varepsilon:[R_1,\infty)^2\to \R$ defined by
\begin{equation*}
    \Phi^\varepsilon(r,s) := V(r) - W^\lambda(s) - \frac{(r-s)^2}{\varepsilon}, 
\end{equation*}
for $\varepsilon>0$. Firstly, we see that 
\begin{equation*}
    \sup_{(r,s)\in[R_1,\infty)^2}\Phi^\varepsilon(r,s)\geq \Phi^\varepsilon(R_2,R_2) = V(R_2) - W^\lambda(R_2)\geq \frac{\theta}{4}>0,
\end{equation*}
for any $\varepsilon>0$. Now for $\varepsilon>0$ sufficiently small and using \eqref{growth inequality}, we have that 
\begin{equation*}
    \sup_{(r,s)\in[R_1,\infty)^2}\Phi^\varepsilon(r,s) = \sup_{(r,s)\in[R_1,R_\infty]^2}\Phi^\varepsilon(r,s) < +\infty
\end{equation*}
with maximizers $\{(r^\varepsilon, s^\varepsilon)\}_{\varepsilon}\subseteq (R_1,R_\infty]$. 
We can repeat the previous proof in Step $1$ for this region, and so we omit it. 

\medskip \begin{bf}
Step 3:  
\end{bf}
Now we consider the region $[0,S_0]$. By the hypothesis, we have that $V(S_0)\leq W(S_0)$. Suppose for contradiction, that 
\begin{equation*}
    \sup_{r\in[0,S_0]}\{ V(r) - W(r)\} = \sigma >0.
\end{equation*}
Again, by continuity we have for $\lambda>1$ sufficiently close to one that 
\begin{equation} \label{sup near origin}
    \sup_{r\in[0,S_0]} \{V(r) - \lambda W(r)\} \geq \frac{3\sigma}{4} >0.
\end{equation}
Then, there exists some $\tilde r\in [0,S_0)$ such that 
\begin{equation*}
    V(\tilde r) - \lambda W(\tilde r)\geq \frac{\sigma}{2}>0.
\end{equation*}
Since $V,W$ are continuous on $[0,S_0]$, we have that they are bounded. We then conclude that \eqref{sup near origin} is finite. Moreover, the auxiliary function 
\begin{equation*}
    \Psi(r,s): = V(r) - \lambda W(s) - \frac{(r - s)^2}{\varepsilon} - \delta\left(\frac{1}{r} + \frac{1}{s}\right) 
\end{equation*}
attains its maximum over $[0,S_0]$. Note that 
\begin{equation*}
    \Psi(\tilde r,\tilde r) = V(\tilde r) - \lambda W(\tilde r) -\delta\frac{2}{\tilde r}\geq \frac{\sigma}{2} - \delta\frac{2}{\tilde r}.
\end{equation*}
So, for $\delta>0$ sufficiently small, we have that 
\begin{equation*}
    \max_{[0,S_0]} \Psi(r,s) = \Psi(r^\varepsilon,s^\varepsilon)>0,
\end{equation*}
for all $\varepsilon>0$. We have that the maximizer $(r^\varepsilon,s^\varepsilon)$ satisfies
\begin{equation} \label{lower bound r}
    0< C_{\delta,\lambda }\leq r^\varepsilon
\end{equation}
and
\begin{equation} \label{lower bound for s}
0< C_{\delta,\lambda }\leq s^\varepsilon,
\end{equation}
for all $\varepsilon>0$. Observe that $\Psi(r^\varepsilon,s^\varepsilon)\geq \Psi(\tilde r,\tilde r)$. Explicitly, 
\[
V(r^\varepsilon) - \lambda W(s^\varepsilon) - \frac{(r^\varepsilon - s^\varepsilon)^2}{\varepsilon} - \delta\left(\frac{1}{r^\varepsilon} + \frac{1}{s^\varepsilon}\right)\geq V(\tilde r) - \lambda W(\tilde r) - \frac{2\delta}{\tilde r}.
\]
It follows then, that 
\[
\frac{(r^\varepsilon - s^\varepsilon)^2}{\varepsilon} \leq \left( V(r^\varepsilon) - V(\tilde r)\right) + \lambda\left(W(\tilde r) - W(s^\varepsilon)\right) +\frac{2\delta}{\tilde r}.
\]
Using the boundedness of $V,W$, we get 
\begin{equation*}
    \frac{(r^\varepsilon - s^\varepsilon)^2}{\varepsilon} \leq 2\left(\|V\|_{\Li([0,S_0])} + \lambda\|W\|_{\Li([0,S_0])}\right) +\frac{2\delta}{\tilde r}\leq C.
\end{equation*}
Hence, $|r^\varepsilon - s^\varepsilon|\leq C\varepsilon^{1/2}$.

Now, observe that $\Psi(r^\varepsilon,s^\varepsilon)\geq \Psi(r^\varepsilon,r^\varepsilon)$. Explicitly, 
\[
V(r^\varepsilon) - \lambda W(s^\varepsilon) - \frac{(r^\varepsilon - s^\varepsilon)^2}{\varepsilon} - \delta\left(\frac{1}{r^\varepsilon} + \frac{1}{s^\varepsilon}\right)\geq V(r^\varepsilon) - \lambda W(r^\varepsilon) - \frac{2\delta}{r^\varepsilon}.
\]
It follows then, that 
\begin{align*}
\frac{(r^\varepsilon - s^\varepsilon)^2}{\varepsilon} &\leq \lambda\left(W(r^\varepsilon) - W(s^\varepsilon)\right) +\delta\left(\frac{1}{r^\varepsilon} - \frac{1}{s^\varepsilon}\right)\leq \lambda \omega(|r^\varepsilon -s^\varepsilon|) + \frac{\delta}{r^\varepsilon s^\varepsilon}|r^\varepsilon - s^\varepsilon| \\
&\leq \lambda \omega(|r^\varepsilon -s^\varepsilon|) + \frac{C\delta}{(C_{\delta,\lambda})^2}\varepsilon^{1/2}, 
\end{align*}
where $\omega$ is the modulus of continuity of W. We send $\varepsilon\to 0$ and, hence, we have 
\begin{equation} \label{nice convergence}
    \lim_{\varepsilon\to 0} \frac{(r^\varepsilon - s^\varepsilon)^2}{\varepsilon} = 0. 
\end{equation} By compactness, we can extract a convergent subsequence, still denoted by the full sequence, such that 
\begin{equation}
    (r^\varepsilon,s^\varepsilon) \to (\hat r,\hat r)\in(0,S_0)^2. 
\end{equation}

We now apply the viscosity subsolution and supersolution tests. For clarity, we denote $p^\varepsilon: = \frac{2(r^\varepsilon - s^\varepsilon)}{\varepsilon}$, $q_{r^\varepsilon}: =\frac{\delta}{({r^\varepsilon})^2}$, and $q_{s^\varepsilon}: = \frac{\delta}{(s^\varepsilon)^2}$. Then, if $V$ and $W$ were smooth, we would have at a maximum 
\begin{equation*}
    DV(r^\varepsilon) = p^\varepsilon - q_{r^\varepsilon},
\end{equation*}
and 
\begin{equation*}
    D[\lambda W(s^\varepsilon)] = p^\varepsilon + q_{s^\varepsilon}.
\end{equation*}
Applying the subsolution and supersolution tests gives
\begin{equation*}
    \left(
    -\frac{(n-1)}{r^\varepsilon}[p^\varepsilon - q_{r^\varepsilon}] + |p^\varepsilon - q_{r^\varepsilon}|
    \right)_+ 
    \leq
    F(r^\varepsilon),
\end{equation*}
and 
\begin{equation*}
    \left(
    -\frac{(n-1)}{s^\varepsilon}[p^\varepsilon + q_{s^\varepsilon}] + |p^\varepsilon + q_{s^\varepsilon}|
    \right)_+
    \geq
    \lambda F(s^\varepsilon).
\end{equation*}
Using the above tests, we see that 
\begin{align} \label{combo for step 3}
    \left(
    -\frac{(n-1)}{r^\varepsilon}[p^\varepsilon - q_{r^\varepsilon}] + |p^\varepsilon - q_{r^\varepsilon}|
    \right)_+  
    &- 
    \left(
    -\frac{(n-1)}{s^\varepsilon}[p^\varepsilon + q_{s^\varepsilon}] + |p^\varepsilon + q_{s^\varepsilon}|
    \right)_+ \\
    &\leq
        F(r^\varepsilon) - \lambda F(s^\varepsilon). \notag
\end{align}
We tackle the more trivial case first. That is, the second cutoff term is zero. Then, it is clear that the left hand side of \eqref{combo for step 3} is non-negative. Then, sending $\varepsilon\to 0$ up to the convergent subsequence gives our contradiction. Thus, we consider the non-trivial case, where the second cutoff term is strictly positive. Let us consider the left hand side of \eqref{combo for step 3}. 
\begin{align*}
\left(
    -\frac{(n-1)}{r^\varepsilon}[p^\varepsilon - q_{r^\varepsilon}] + |p^\varepsilon - q_{r^\varepsilon}|
    \right)_+  
    &+ 
    \frac{(n-1)}{s^\varepsilon}[p^\varepsilon + q_{s^\varepsilon}] - |p^\varepsilon + q_{s^\varepsilon}| \\
    &\geq 
    -\frac{(n-1)}{r^\varepsilon}[p^\varepsilon - q_{r^\varepsilon}] + |p^\varepsilon - q_{r^\varepsilon}| \\
    &+ 
    \frac{(n-1)}{s^\varepsilon}[p^\varepsilon + q_{s^\varepsilon}] - |p^\varepsilon + q_{s^\varepsilon}|.
\end{align*}
This implies that 
\begin{align}\label{almost contradiction}
    0 
    \leq
    [F(r^\varepsilon) - \lambda F(s^\varepsilon)] &+ (n-1)p^\varepsilon\left(\frac{1}{r^\varepsilon} - \frac{1}{s^\varepsilon}\right) - (n-1)\left( \frac{q_{s^\varepsilon}}{s^\varepsilon} + \frac{q_{r^\varepsilon}}{r^\varepsilon} \right) \\
    & + \left[ |p^\varepsilon + q_{s^\varepsilon}| -  |p^\varepsilon -q_{r^\varepsilon}| \right] = :A + B + C + D.  \notag
\end{align}
For $A$, we send $\varepsilon\to0$ up to a subsequence, which gives 
\begin{equation} \label{A result}
    \lim_{\varepsilon\to 0} A = (1-\lambda) F(\hat r) <0. 
\end{equation}

Next, we consider $B$ and observe that
\begin{equation*}
    |B| = (n-1)|p^\varepsilon|\left| \frac{1}{r^\varepsilon} - \frac{1}{s^\varepsilon}\right|\leq \frac{2(n-1)}{r^\varepsilon s^\varepsilon} \frac{(r^\varepsilon - s^\varepsilon)^2}{\varepsilon} 
    \leq 
    \frac{2(n-1)}{(C_{\delta,\lambda})^2} \frac{(r^\varepsilon - s^\varepsilon)^2}{\varepsilon},
\end{equation*}
where we used \eqref{lower bound r} and \eqref{lower bound for s} to derive the last estimate. Lastly, using \eqref{nice convergence} we see that 
\begin{equation*} 
\lim_{\varepsilon\to0} B = 0.
\end{equation*}
We now deal with $C$. It follows that 
\begin{equation*}
    |C| = (n-1)\left(\frac{\delta}{(s^\varepsilon)^3} + \frac{\delta}{(r^\varepsilon)^3} \right). 
\end{equation*}
Send $\varepsilon \to 0$ up to a subsequence and we get 
\begin{equation*}
    \lim_{\varepsilon \to 0} |C| \leq \frac{2(n-1)}{(\hat r)^3} \delta. 
\end{equation*}
Sending $\delta\to 0$ afterwards gives the desired result. 
Lastly, consider 
\begin{align*}
    |D| &= \left| |p^\varepsilon + q_{s^\varepsilon}| -  |p^\varepsilon -q_{r^\varepsilon}| \right| \leq \left| p^\varepsilon + q_{s^\varepsilon} - p^\varepsilon + q_{r^\varepsilon}\right| = \left| q_{s^\varepsilon} + q_{r^\varepsilon}\right| \\
    & = \left| \frac{\delta}{(s^\varepsilon)^2} + \frac{\delta}{(r^\varepsilon)^2} \right| \notag
\end{align*}
where we used the triangle inequality. Then, we see that 
\begin{align*}
    \lim_{\varepsilon\to0}|D|\leq \frac{2\delta}{(\hat r)^2},
\end{align*}
up to a subsequence. Next, sending $\delta\to0$ implies our desired result. Combining these results for $A,B,C,$ and $D$ gives a contradiction, which finishes \emph{Step $3$}.
Hence, we must have the Comparison Principle for $(0,\infty)$. 
\end{proof}

We define the \emph{half-relaxed} limits $U^+,U^-\in C([0,\infty))$ of the unique solution to the Cauchy problem \eqref{radcauchy} by 
\begin{equation*}
    U^+(r): = \limsups\limits_{t\to\infty}{U(r,t)} = \lim\limits_{t\to\infty}\sup\left\{U(\rho,s): |r-\rho|<1/s, s\geq t\right\}
\end{equation*}
and
\begin{equation*}
    U^-(r): = \liminfs\limits_{t\to\infty}{U(r,t)} = \lim\limits_{t\to\infty}\inf \left\{U(\rho,s):|r- \rho|<1/s,s\geq t \right\}.
\end{equation*}
\begin{proof}[Proof of Theorem \ref{lt behavior rad}.]

Observe that by definition, we have that $U^-\leq U^+$ for all $r\in(0,\infty)$. Moreover, by stability of the viscosity solution, we have that $U^+$ and $U^-$ are a subsolution and supersolution to \eqref{radial ergodic}, respectively. Using Lemma \ref{2nd radial CP}, we must have that $U^+\leq U^-$, which clearly gives $U^+ = U^-$.
By stability of viscosity solutions, $V: = U^+ = U^-$ is a solution to the ergodic problem \eqref{radial ergodic}. Hence, the theorem is proven true.
\end{proof}

\begin{rem}
The class of $C([0,\infty))$ subsolutions and supersolutions with linear growth that are suitable for our proof of the Comparison Principle in Lemma \ref{2nd radial CP} is non-empty. Indeed, we are able to explicitly construct a classical solution with linear growth via a similar strategy used in \cite{remarks}. 
\end{rem}

    \begin{proof}[Proof of Theorem \ref{Thm Rep formula for limit}]
       Using the fact \eqref{monotonicity}, we have that for $r\in\cA_{rad}$
        \begin{equation*}
            U(r,t) \to \inf_{\gamma\in\mathcal{C}(t,0;r,s),s\in(0,\infty),t>0}\left\{ 
\int_0^t F(\gamma(z))\,dz+ G(s)\right\}= v_G(r),
        \end{equation*}
        as $t\to \infty$.
    In light of the uniqueness property of $\cA_{rad}$ and the convexity of the Hamiltonian, the limiting equation is represented by \eqref{rep formula for limit}. We refer the reader to Corollary $5.25$ in \cite{dynamical}, for more details on this result.   
    \end{proof}
    \medskip


\section{Regularity of Viscosity Solutions}

\subsection{Local Lipschitz Regularity in the Periodic Setting} In order to utilize an argument that requires spatial and temporal invariance of the Hamiltonian, we set the \emph{wind flow} vector field to zero, i.e., we assume
\begin{equation}
    \vec W(x) \equiv \vec 0.
\end{equation}
That is, we study the regularity of 
\begin{equation} \tag{N} \label{periodic no wind}
\left\{
\begin{aligned}
    u_t + \left(- a^{ij}(Du)(D^2u)_{ij} +|Du|\right)_+ =f(x) \quad &\text{ in } \T^n\times(0,\infty)\\
    u(x,0) = g(x) \quad &\text{ on } \T^n,
    \end{aligned}
\right.
\end{equation}
with $f\in \Lip(\T^n)$ and $g\in C^2(\T^n)$.

\lem{Assume $f\in \Lip(\T^n)$, $g\in C^2(\T^n)$. Let $u\in C(\T^n\times(0,\infty))$ be the unique viscosity solution to 
\eqref{periodic no wind}. For any $T\in(0,\infty)$, we have that $u\in\Lip(\T^n\times [0,T])$.
}
\proof{} Let us first show that $u$ is Lipschitz in space. To that end, we construct a suitable viscosity subsolution to \eqref{periodic no wind}. Fix some $y\in\T^n$ and $t\in[0,T]$. Consider the function
\begin{equation*}
\varphi(x,t): = u(x-y,t)-C_0|y|t -C_1|y|,
\end{equation*}
with constants $C_0,C_1>0$ to be chosen later. We denote the Lipschitz constants of the source, $f$, and initial data, $g$, by $L_f$ and $L_g$, respectively. Observe that $\varphi$ is a viscosity subsolution to \eqref{periodic no wind}, for appropriately chosen constants $C_0, C_1$. Indeed, we have 
\begin{align*}
    \varphi(x,0) &= u(x-y,0)- C_1|y| = g(x-y)-C_1|y|\\
    & = g(x-y)-g(x) +g(x) -C_1|y|\leq L_g|y| +g(x) -C_1|y|\\
    & = g(x), 
\end{align*}
for $C_1\geq L_g$. Consider 
\begin{align*}
    \varphi_t&(x,t) + \left( -a^{ij}(D\varphi(x,t))[D^2\varphi(x,t)]_{ij} + |D\varphi(x,t)| \right)_+ \\
    &=u_t(x-y,t) - C_0|y| + \left( -a^{ij}(Du(x-y,t))[D^2u(x-y,t)]_{ij} + |Du(x-y,t)| \right)_+ \\
    & = f(x-y) - C_0|y| = f(x-y) - f(x) + f(x) - C_0|y| \\
    & \leq L_f|y| - C_0|y| +f(x) \leq f(x),
\end{align*}
where we used the fact that spatial translations of $u(x,t)$ are still viscosity solutions to \eqref{periodic no wind} at a translated point and chose the constant such that $C_0\geq L_f$. It is clear that $\varphi$ is a viscosity subsolution to \eqref{periodic no wind}. We now create the barrier function from above. Consider the function 
\begin{equation*}
    \psi(x,t): = u(x-y,t) + C_0|y|t + C_1|y|. 
\end{equation*}
We see that in a similar fashion that 
\begin{equation*}
    \psi(x,0) \geq g(x)
\end{equation*}
and 
\begin{equation*}
    \psi_t(x,t) + \left( -a^{ij}(D\psi(x,t))[D^2\psi(x,t)]_{ij} + |D\psi(x,t)| \right)_+ \geq f(x),
\end{equation*}
due to our choices of $C_0$ and $C_1$. We conclude that $\psi$ is a viscosity supersolution to \eqref{periodic no wind}. By the usual Comparison Principle for \eqref{periodic no wind}, we have that
\begin{equation*}
    \varphi(x,t)\leq u(x,t)\leq\psi(x,t),
\end{equation*}
for all $(x,t)\in \T^n\times[0,T]$. Therefore, we have 
\begin{equation} \label{space Lip}
    |u(x,t)-u(x-y,t)|\leq (C_0t+C_1)|y|\leq (C_0T +C_1)|y|. 
\end{equation}
Hence, we conclude that $u$ is Lipschitz in space. Note, however, that the estimate \eqref{space Lip} depends on $T$. We make a note on this later, see Remark \ref{not global}. \medskip

Now we show that $u$ is Lipschitz in time. Again, we will use barrier functions and the Comparison Principle along with the temporal invariance of the Hamiltonian in \eqref{periodic no wind} to conclude this. Consider the barrier functions
\begin{equation*}
    \phi(x,t):= -\left(\|f\|_{\Li(\T^n)} + C\right)t + g(x)
\end{equation*}
and 
\begin{equation*}
    \Phi(x,t): = \left( \|f\|_{\Li(\T^n)} + C \right) t+g(x). 
\end{equation*}
Here, we choose the constant $C>0$ satisfying
\begin{equation}\label{bound on initial nonlinearity}
    \left( -a^{ij}(Dg(x))[D^2g(x)]_{ij} + |Dg(x)| \right)_+ \leq C,
\end{equation}
for all $x\in\T^n$. 
Clearly, we have
\begin{equation*}
    \phi(x,0) = g(x) = \Phi(x,0).
\end{equation*}
Now observe that
\begin{align*}
    \phi_t(x,t) &+ \left( -a^{ij}(D\phi(x,t))[D^2\phi(x,t)]_{ij} + |D\phi(x,t)| \right)_+ \\
    &= -\left(\|f\|_{\Li(\T^n)} + C \right) +\left( -a^{ij}(Dg(x))[D^2g(x)]_{ij} + |Dg(x)| \right)_+ \\
    & \leq -\|f\|_{\Li(\T^n)} \leq f(x).
\end{align*}
Therefore, $\phi$ is a viscosity subsolution to \eqref{periodic no wind}. On the other hand, 
\begin{align*}
    \Phi_t(x,t) &+ \left( -a^{ij}(D\Phi(x,t))[D^2\Phi(x,t)]_{ij} +|D\Phi(x,t)| \right)_+ \\
    & = \left( \|f\|_{\Li(\T^n)} + C \right) +\left( -a^{ij}(Dg(x))[D^2g(x)]_{ij} + |Dg(x)| \right)_+ \\
    & \geq \|f\|_{\Li(\T^n)} \geq f(x). 
\end{align*}
Note that we do not need to use \eqref{bound on initial nonlinearity} for this step as the operator is non-negative definite. 
We conclude that $\Phi$ is a viscosity supersolution to \eqref{periodic no wind}. The Comparison Principle implies that 
\begin{equation*}
    \phi(x,t)\leq u(x,t)\leq\Phi(x,t),
\end{equation*}
for all $(x,t)\in \T^n\times[0,T]$. This directly gives
\begin{equation*}
    \frac{|u(x,t) - u(x,0)|}{t}\leq \left( \|f\|_{\Li(\T^n)} + C \right), 
\end{equation*}
and so,
\begin{equation} \label{lip at time zero}
    \sup_{t\geq0}\left\{\frac{|u(x,t)-u(x,0)|}{t}\right\}\leq \left( \|f\|_{\Li(\T^n)} + C\right).
\end{equation}
Therefore, $u$ is Lipschitz in time at $t=0$. Now fix some $s>0$, and let $v(x,t):=u(x,t+s)$. Since the PDE is invariant in time, $v$ is another viscosity solution to \eqref{periodic no wind} with different initial data $v_0(x): =v(x,0)=u(x,s)$. Denote $u_0(x): = u(x,0)$. Observe that 
\begin{equation*}
    v_0 - \|u_0-v_0\|_{\Li(\T^n)} \leq u_0 \leq v_0 + \|u_0-v_0\|_{\Li(\T^n)}.
\end{equation*}
This, and the fact that the \eqref{periodic no wind} does not \emph{see} additive constants implies that 
\begin{equation*}
    v - \|u_0-v_0\|_{\Li(\T^n)}
\end{equation*}
and 
\begin{equation*}
    v + \|u_0-v_0\|_{\Li(\T^n)}
\end{equation*}
are a viscosity subsolution and supersolution to \eqref{periodic no wind}, respectively. Then, by the usual Comparison Principle we have that 
\begin{equation*}
    v(x,t) - \|u_0-v_0\|_{\Li(\T^n)}\leq u(x,t)\leq v(x,t) + \|u_0-v_0\|_{\Li(\T^n)},
\end{equation*}
for all $(x,t)\in \T^n\times(0,\infty)$. It follows that 
\begin{align*}
    \frac{|u(x,t+s) - u(x,t)|}{s} &\leq \left\|\frac{u_0-v_0}{s}\right\|_{\Li(\T^n)} 
    = \left\|\frac{u(\cdot,s)-u(\cdot,0)}{s}\right\|_{\Li(\T^n)}\\
    &\leq \left( \|f\|_{\Li(\T^n)} + C \right) .
\end{align*}
This holds for all $s>0$, and so we conclude that $u$ is globally Lipschitz in time. \qed

A few remarks are in order.

\begin{rem} \label{not global}
Due to our spatial gradient bound being dependent on time, we are only able to show local Lipschitz regularity. As $T\to\infty$, our estimate \eqref{space Lip} becomes immaterial. 
\end{rem}

\begin{rem}
    In order to achieve a global regularity result, we will require different methods. For example, it may be fruitful to use the Viscosity Bernstein method used in \cite{vis bernstein} with some modifications as the non-coercivity and lack superlinearity of the Hamiltonian creates challenges. It also may be possible that the unique viscosity solution to \eqref{periodic cauchy} is indeed not globally Lipschitz, and a counterexample could be found. This will be a future direction of research related to this problem. 
\end{rem}

\subsection{Lipschitz Regularity in Radially Symmetric Setting}
We now discuss the regularity of viscosity solution $U$ to \eqref{radcauchy}. Our main tool to answer this regularity question is the representation formula \eqref{refined rep formula}, which we state below for convenience:
\begin{align*}
    U(r,t) = \inf_{\gamma\in\mathcal{C}(t,0;r,s),s\in(0,\infty)}\left\{ 
\int_0^t F(\gamma(z))\,dz+ G(\gamma(0))\right\}.
\end{align*}

We now state the local regularity result:
\begin{lem}
    Assume that $F\in C^1((0,\infty))\cap \Li((0,\infty))$ and $G\in C^1((0,\infty))$. Let $U\in C([0,\infty)\times(0,\infty))$ be the viscosity solution to \eqref{radcauchy}. Then, $U$ is globally Lipschitz in time, and, additionally, we have that $U\in\Lip((\alpha,\infty)\times(0,\infty))$ for any $\alpha>n-1$.
\end{lem}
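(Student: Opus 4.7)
The plan is to use the representation formula \eqref{refined rep formula} together with the speed-limit structure of the admissible class $\mathcal{C}(t,0;r,s)$, treating the time and spatial estimates in turn.

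\textbf{Global Lipschitz in time.} The one-sided bound $U(r,t+h)-U(r,t) \leq \|F\|_{\Li}\,h$ follows by extending any (almost-)minimizing trajectory $\gamma^*$ for $U(r,t)$ by the constant path $\gamma \equiv r$ on $[t,t+h]$: the constraint set \eqref{admissible class} always allows $\dot\gamma \equiv 0$, and the added running cost is exactly $F(r) h$. For the reverse bound I would take $\gamma^*$ optimal for $U(r,t+h)$ and use the time-shifted trajectory $\tilde\gamma(z):=\gamma^*(z+h)$ on $[0,t]$, which lies in $\mathcal{C}(t,0;r,\gamma^*(h))$; comparing costs gives
\begin{equation*}
U(r,t) - U(r,t+h) \leq G(\gamma^*(h)) - G(\gamma^*(0)) - \int_0^h F(\gamma^*)\,dz \leq G(\gamma^*(h)) - G(\gamma^*(0)).
\end{equation*}
The final expression should be controlled by confining $\gamma^*$ to a bounded set (using $F\to c_F>0$ at infinity from \eqref{R2}, so faraway excursions are too costly) together with the speed bound $|\dot\gamma^*(z)| \leq 1 + (n-1)/\gamma^*(z)$, yielding $|G(\gamma^*(h)) - G(\gamma^*(0))| \leq L h$ with $L$ uniform in $r$ and $t$.

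\textbf{Joint Lipschitz on $(\alpha,\infty)\times(0,\infty)$.} For $r_1 < r_2$ in $(\alpha,\infty)$ the forward speed limit satisfies $1-(n-1)/\gamma \geq c_\alpha := 1 - (n-1)/\alpha > 0$. With $s := (r_2-r_1)/c_\alpha$, the linear interpolant $\gamma(z) := r_1 + c_\alpha(z-(t-s))$ on $[t-s,t]$ is admissible and stays inside $[r_1,r_2] \subset (\alpha,\infty)$. The dynamic programming principle derived from \eqref{refined rep formula} gives
\begin{equation*}
U(r_2,t) \leq \int_{t-s}^{t} F(\gamma(z))\,dz + U(r_1, t-s) \leq \|F\|_{\Li}\, s + U(r_1, t) + Ls,
\end{equation*}
where the last step uses the time Lipschitz estimate from the previous step. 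Rearranging gives $U(r_2,t) - U(r_1,t) \leq (L + \|F\|_{\Li})c_\alpha^{-1}(r_2 - r_1)$. The reverse inequality follows identically by running a backward linear path of speed $-1$ from $r_2$ to $r_1$, which is always admissible since $-1 \geq -1 - (n-1)/\gamma$. Combining the spatial estimate with the time estimate delivers the claimed joint Lipschitz regularity on $(\alpha,\infty)\times(0,\infty)$.

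\textbf{Main obstacle.} The delicate point is the uniform (in both $r$ and $t$) bound on $G(\gamma^*(h))-G(\gamma^*(0))$ in the time step, as it requires controlling the drift of an optimal trajectory over time $h$ uniformly in the terminal radius $r$. A cleaner alternative is a viscosity-barrier argument: set $\phi^\pm(r,t) := G(r) \pm Ct$, observe that $\phi^+$ is a supersolution with $C = \|F\|_{\Li}$ (immediate from $H \geq -\|F\|_{\Li}$), and verify $\phi^-$ is a subsolution once $C$ dominates $\bigl(-\tfrac{n-1}{r}G'(r) + |G'(r)|\bigr)_+$. The subsolution step is the restrictive one because of the $1/r$ singularity, and is reconciled by the implicit assumption (consistent with \eqref{R1}) that $G$ is derived from a $C^2$ radial function on $\R^n$, in which case $G'(r)/r$ remains bounded at the origin. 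Once either route is completed, the comparison principle for \eqref{radcauchy} yields $\|U(\cdot,t)-G\|_{\Li} \leq Ct$, and applying the comparison principle between $U(\cdot,\cdot+h)$ and $U(\cdot,\cdot)$ promotes this to global Lipschitz regularity in time.
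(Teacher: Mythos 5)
Your spatial argument and the paper's diverge in a genuine way, and your handling of the time estimate is the same as the paper's but more candid about where it is delicate.

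For the \emph{time} estimate, both you and the paper use the optimal-control representation \eqref{refined rep formula} with the ``pause at $r$'' policy: take a near-minimizer $\gamma^*$ for $U(r,t_1)$, extend by the constant path on $[t_1,t_2]$, and read off $U(r,t_2)-U(r,t_1)\le \|F\|_{\Li}(t_2-t_1)$. This matches the paper. The paper then writes the result as a two-sided bound by wrapping everything in absolute values, but the absolute-value manipulation replaces an infimum by a sub-optimal cost inside $|\cdot|$, which only produces an upper bound, not a lower bound, on $U(r,t_2)-U(r,t_1)$; the reverse inequality $U(r,t_1)-U(r,t_2)\le C(t_2-t_1)$ is really a separate claim. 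You are right to call this out. Your proposed route via shifting an optimizer for $U(r,t_2)$ leads exactly to the term $|G(\gamma^*(h))-G(\gamma^*(0))|$, and the difficulty you name—uniform control of the drift—is genuine: the admissible class allows $\dot\gamma$ as negative as $-1-(n-1)/\gamma$, which is unbounded near the origin, so under only $G\in C^1((0,\infty))$ and $F\in L^\infty$ it is not clear this quantity is $O(h)$ uniformly. Your alternative barrier fix $\phi^\pm(r,t)=G(r)\pm Ct$ is a clean different route, but as you also flag, the subsolution inequality needs $\bigl(-\tfrac{n-1}{r}G'(r)+|G'(r)|\bigr)_+$ to be bounded, which is more than the lemma's stated $G\in C^1((0,\infty))$; you would need the implicit smoothness of $g=G(|\cdot|)$ at the origin (indeed $g\in C^2(\R^n)$ gives $G'(r)/r$ bounded). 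So the proposal is not wrong, but you should note you are strengthening the hypotheses to close the loop.

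For the \emph{spatial} estimate on $(\alpha,\infty)$, your argument is genuinely different from the paper's and is correct. The paper works directly with the PDE: since $H(r,p)\ge\bigl(1-\tfrac{n-1}{\alpha}\bigr)|p|-\|F\|_{\Li}$ for $r>\alpha$, the equation $U_t+H(r,U_r)=0$ together with the a priori bound $\|U_t\|_{\Li}\le C$ yields $\|U_r\|_{\Li((\alpha,\infty)\times(0,\infty))}\le C/(1-(n-1)/\alpha)$. You instead argue at the level of trajectories via the dynamic programming principle: a linear path at speed $c_\alpha=1-(n-1)/\alpha$ (forward) or $-1$ (backward) stays admissible and inside $(\alpha,\infty)$, transports the value from $r_1$ to $r_2$ in time comparable to $|r_2-r_1|$, and the accrued running cost plus the time-Lipschitz bound gives $|U(r_2,t)-U(r_1,t)|\le (\|F\|_{\Li}+L)c_\alpha^{-1}|r_2-r_1|$. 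Both approaches are valid; the paper's is shorter because it exploits the coercivity of $H$ directly through the viscosity PDE, whereas yours is self-contained at the control level. One minor caveat in your version: the DPP step on $[t-s,t]$ needs $t\ge s$; for small $t$ you should compare down to time $0$ and use $G$ there, which works but should be said.

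In short: time estimate, same idea as the paper with a more explicitly diagnosed (and only partially repaired) gap in the reverse direction; space estimate, correct but by a different (control-theoretic) route than the paper's PDE-coercivity argument.
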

\begin{proof}
    We first show that $U$ is globally Lipschitz in time. To that end, consider two fixed times $t_1,t_2\in(0,\infty)$ and without loss of generality we assume $t_1<t_2$.
    By the representation formula we have
    \begin{align*}
        | U(r,t_2) - U(r,t_1) | &= 
        \left|
        \inf_{\gamma\in\mathcal{C}(t_2,0;r,s),s\in(0,\infty)} \left\{ \int_0^{t_2} F(\gamma(z))\,dz + G(\gamma(0)) \right\} \right. \\
         & \qquad \qquad \qquad - 
         \left. \inf_{\gamma\in\mathcal{C}(t_1,0;r,s),s\in(0,\infty)} \left\{ \int_0^{t_1} F(\gamma(z))\,dz + G(\gamma(0)) \right\}
        \right|.
    \end{align*}
    Fix $\varepsilon>0$, and choose a trajectory $\gamma^*$ with initial starting point $s^*$ such that 
    \begin{equation}
        -U(r,t_1) \leq \varepsilon - \int_0^{t_1} F(\gamma^*(s))\,ds - G(\gamma^*(0))
    \end{equation}
    Now observe
    \begin{align*}
        | U(r,t_2) - U(r,t_1) | &= 
        \left|
        \inf_{\gamma\in\mathcal{C}(t_2,0;r,s),s\in(0,\infty)} \left\{ \int_0^{t_2} F(\gamma(z))\,dz + G(\gamma(0)) \right\} \right. \\
         & \qquad \qquad \qquad - 
         \left. \inf_{\gamma\in\mathcal{C}(t_1,0;r,s),s\in(0,\infty)} \left\{ \int_0^{t_1} F(\gamma(z))\,dz + G(\gamma(0)) \right\}
        \right|\\
        & \leq \left| 
        \inf_{\gamma\in\mathcal{C}(t_2,0;r,s),s\in(0,\infty)} \left\{ \int_0^{t_2} F(\gamma(z))\,dz - G(\gamma(0)) \right\} \right.\\ 
        & 
        \left. \qquad \qquad \qquad - \int_0^{t_1} F(\gamma^*(s))\,ds - G(\gamma^*(0)) + \varepsilon
        \right| \\
        & \leq \left| 
        \inf_{\gamma\in\mathcal{C}(t_2,0;r,s),s\in(0,\infty)} \left\{ \int_0^{t_2} F(\gamma(z))\,dz + G(\gamma(0)) \right\} \right.\\ 
        & 
        \left. \qquad \qquad \qquad - \int_0^{t_1} F(\gamma^*(s))\,ds - G(\gamma^*(0))
        \right| + \varepsilon.
    \end{align*}
    For the first term on the right hand side, we will choose the sub-optimal policy in which we follow the near optimal trajectory $\gamma^*$ until time $t_1$, then we remain there at $r$ until time $t_2$. Explicitly, 
    \begin{align*}
        | U(r,t_2) - U(r,t_1) |&\leq \left| 
        \int_0^{t_1} F(\gamma^*(s))\,ds + \int_{t_1}^{t_2} F(r)\,ds + G(\gamma^*(0)) \right. \\
        &\left. \qquad \qquad \qquad \qquad - \int_0^{t_1} F(\gamma^*(s))\,ds - G(\gamma^*(0))
        \right| + \varepsilon\\
        & = \left|
        F(r)\int_{t_1}^{t_2}\,ds
        \right| \leq \|F\|_{\Li((0,\infty))}|t_2 - t_1| +\varepsilon. 
    \end{align*}
    Letting $\varepsilon\to 0$, we conclude that $U$ is Lipschitz in time, i.e., 
    \begin{equation} \label{a priori est time}
        \|U_t\|_{\Li((0,\infty)\times(0,\infty))} \leq C.
    \end{equation}
    
    We point our focus to the spatial regularity and use the coercivity of the Hamiltonian $H(r,p)$ in the region $(n-1,\infty)$. Indeed, let $\alpha>n-1$ and $r \in (\alpha,\infty)$, then observe 
    \begin{align*}
        H(r,p) &= \left( -\frac{(n-1)}{r} p +|p| \right)_+ - F(r) = -\frac{(n-1)}{r} p +|p| - F(r) \\
        &\geq - \frac{(n-1)}{\alpha}|p| +|p| - \|F\|_{\Li((0,\infty))}
        = \left( 1-\frac{(n-1)}{\alpha} \right)|p| - \|F\|_{\Li((0,\infty))}.
    \end{align*}
    This along with \eqref{a priori est time} and the PDE in \eqref{radcauchy}, gives
    \begin{equation} \label{a priori est space}
        \|U_r\| _{\Li((\alpha,\infty)\times(0,\infty))} \leq \frac{C}{\left(1-\frac{(n-1)}{\alpha}\right)}.
    \end{equation}
\end{proof}

\begin{rem}
    Observe that as $\alpha\to n-1$, the estimate \eqref{a priori est space} becomes useless, and provides some insight to the delicate nature of the viscosity solution $U$ on the interface of the regions $(0,n-1]$ and $(n-1,\infty)$. The loss of coercivity of the Hamiltionian in $(0,n-1]$ asserts this sensitivity. This is an interesting question that requires more delicate analysis of the solution using its representation formula.
\end{rem}

\section*{Acknowledgments}

The author would like to thank Professors Hung V. Tran and Hiroyoshi Mitake for their invaluable feedback and suggestions. Moreover, the comments provided by Jiwoong Jang are greatly appreciated. 


\begin{thebibliography}{30} 

\bibitem{vis bernstein}
Scott Armstrong, Hung V. Tran, \emph{Viscosity solutions of general viscous Hamilton-Jacobi equations}, Mathematische Annalen 361, 647–687 (2015).

\bibitem{well-posedness}
Yun-Gang Chen, Yoshikazu Giga, Shun'ichi Goto, \emph{Uniqueness and existence of viscosity solutions of generalized mean curvature flow equations}, Journal of Differential Geometry 33, (1991) 749-786.

\bibitem{user}
Michael G. Crandall, Hitoshi Ishii, Pierre-Louis Lions, \emph{User’s guide to viscosity solutions of second order partial differential equations}, Bulletins American Mathematical Society 27, 1-67 (1992).

\bibitem{evans and spruck}
L.C. Evans, J. Spruck, \emph{Motion of level sets by mean curvature. I}, J. Diﬀerential Geom., 33 (1991), 635-681.

\bibitem{yifeng1}
Hongwei Gao, Ziang Long, Jack Xin, Yifeng Yu, \emph{Existence of an Effective Burning Velocity in a Cellular Flow for the Curvature G-Equation Proved Using a Game Analysis}. Journal of Geometric Analysis 34, 81 (2024).

\bibitem{giga surface evolution}
Yoshikazu Giga, \emph{Surface evolution equations - A level set approach}, Monographs in Mathematics, Birkh\"{a}user, (2006).

\bibitem{birth and spread}
Yoshikazu Giga, Hiroyoshi Mitake, T. Ohtsuka, Hung V. Tran, \emph{Existence of asymptotic speed of solutions to birth and spread type nonlinear partial differential equations}, Indiana University Math Journal, (2021), Vol 70(1), 121–156.

\bibitem{asymptotic speed}
Yoshikazu Giga, Hiroyoshi Mitake, Hung V. Tran, \emph{On asymptotic speed of solutions to level-set mean curvature flow equations with driving and source terms}, SIAM journal on Mathematical Analysis Volume 48, Issue 55 (2014).

\bibitem{remarks}
Yoshikazu Giga, Hiroyoshi Mitake, Hung V. Tran, \emph{Remarks on large time behavior of level-set mean curvature flow equations with driving and source terms}, Discrete and Continuous Dynamical Systems - Series B (DCDS-B). Volume 25, Issue 10, 3983-3999 (2020).

\bibitem{jiwoong}
Jiwoong Jang. Periodic homogenization of geometric equations without perturbed correctors. Math. Ann. 391, 3143–3180 (2025).

\bibitem{turbulent velocity}
Wenjia Jing, Jack Xin, Yifeng Yu, \emph{Does Yakhot's growth law for turbulent burning velocity hold?}, Preprint, (2024).

\bibitem{dynamical}
Nam Q. Le, Hiroyoshi Mitake, Hung V. Tran, \emph{Dynamical and Geometric Aspects of Hamilton-Jacobi and Linearized Monge-Ampère Equations}, The Vietnamese Institute of Advanced Study in Mathematics, Springer, Volume 2183, (2016).

\bibitem{curvature in shear flow}
J. Lyu, J. Xin, Y. Yu, \emph{Curvature Effect in Shear Flow: Slowdown of Turbulent Flame Speeds with Markstein Number}, Communications in Mathematical Physics, 359 (2018), pp. 515–533.

\bibitem{markstein1}
G. H. Markstein, \emph{Experimental and theoretical studies of flame front stability}, J. Aero. Sci., 18, pp. 199–209, (1951).


\bibitem{bifurcations}
Hiroyoshi Mitake, Connor Mooney, Hung V. Tran, Jack Xin, Yifeng Yu, \emph{Bifurcation of homogenization and nonhomogenization of the curvature G-equation with shear flows,} Mathematische Annalen 391, 3077–3111 (2025).

\bibitem{namah-roquejoffre}
G. Namah, J.-M. Roquejoffre, Remarks on the long time behaviour of the solutions of Hamilton-Jacobi equations, Comm. Partial Differential Equations 24 (1999), no. 5-6, 883–893.

\bibitem{lvl set method}
S. Osher, R. Fedkiw, \emph{Level Set Methods and Dynamic Implicit Surfaces}, Springer-Verlag, New York, (2002).

\bibitem{osher}
S. Osher, J. Sethian, \emph{Fronts propagating with curvature-dependent speed: Algorithms based on Hamilton-Jacobi formulations}, Journal of Computational Physics 79 (1), pp. 12-49, (1988).

\bibitem{combustion}
N. Peters, \emph{Turbulent Combustion}, Cambridge University Press, Cambridge, (2000).


\bibitem{game theory}
Paul Ronney, Jack Xin, Yifeng Yu, \emph{Lagrangian game theoretic and PDE methods for averaging G-equations in turbulent combustion: existence and beyond}, Bulletin of AMS, Volume 61, Number 3, (2024), Pages 470–514.

\bibitem{tran}
Hung Vinh Tran, \emph{Hamilton-Jacobi Equations}, Graduate Studies in Mathematics Vol. 213, American Mathematical Society, (2021).

\bibitem{williams}
F. Williams, \emph{Turbulent combustion}, In: Buckmaster, J. (ed.) The Mathematics of Combustion. SIAM, Philadelphia, pp. 97–131, (1985).

\bibitem{Jack Xin}
J. Xin, Y. Yu, \emph{Sharp asymptotic growth laws of turbulent flame speeds in cellular flows by inviscid Hamilton-Jacobi models}, Annales de l’Institut Henri Poincar\'e C, Analyse Non Lin\'eaire, 30 (2013), no. 6, pp. 1049–1068.

\bibitem{simulations}
J. Zhu, P.D. Ronney, \emph{Simulation of Front Propagation at Large Non-dimensional Flow Disturbance Intensities}, Combustion Science and Technology, volume 100, No.1, 183–201, (1994).

\end {thebibliography}

\end{document}